\def\slack{\mathrm{slack}}
\def\nom{\mathrm{nom}}
\def\tube{\mathrm{robust}}
\def\impl{\mathrm{impl}}
\def\Input{\mathrm{input}}
\def\tubeSlack{\mathrm{robust,soft}}
\def\SoftP{Soft-\textbf{P}\xspace}
\def\SoftG{Soft-\textbf{G}\xspace}
\def\SoftT{Soft-\textbf{T}\xspace}
\def\change{\textcolor{black}}
\newtheorem{assumption}{\bf Assumption}
\newtheorem{definition}{\bf Definition}
\newtheorem{theorem}{\bf Theorem}
\newtheorem{lemma}{\bf Lemma}
\newtheorem{remark}{\bf Remark}
\begin{document}
\begin{frontmatter}

\title{A model predictive control framework with robust stability guarantees under \change{unbounded} disturbances}

\author{Johannes K\"ohler}$^{1,2}$\ead{j.kohler@imperial.ac.uk},
\author{ Melanie N. Zeilinger}$^1$\ead{mzeilinger@ethz.ch}          
\address{$^1$Institute for Dynamic Systems and Control, ETH Zurich, Zurich CH-8092, Switzerland}  
\address{$^2$Mechanical Engineering Department, Imperial College London, SW7 2AZ London, UK}  
%\begin{keyword}                           %up to six keywords, 3 written by myself (2 from list currently)
%Model predictive control;  soft constraints, input-to-state stability; control of constrained systems; robust control of nonlinear systems.              % partially chosen from the IFAC 
%\end{keyword}  
%%%%%%%%%%%%%%%%%%%%%%%%%%%%%%%%%%%%%%%%%%%%%%%%%%%%%%%%%%%%%%%%%%%%%%%%%%%%%%%%%
\begin{abstract} 
To address feasibility issues in model predictive control (MPC), most implementations relax state constraints by using slack variables and adding a penalty to the cost. We propose an alternative strategy: relaxing the initial state constraint with a penalty. 
Compared to state-of-the-art soft constrained MPC formulations, the proposed formulation has two key features:
(i) input-to-state stability and bounds on the cumulative constraint violation for \change{unbounded} disturbances; 
(ii) close-to-optimal performance under nominal operating conditions. 
%\JK{clarify open-loop stable}
\ifbool{arxiv}{The idea is initially presented for open-loop asymptotically stable nonlinear systems by designing the penalty as a Lyapunov function, but we also show how to relax this condition to: i) Lyapunov stable systems; ii) stabilizable systems; and iii) utilizing an implicit characterization of the Lyapunov function.}{\change{To address unbounded disturbances with compact input constraints, we focus on open-loop stable systems; however, the method applies equally to stabilizable systems when soft input constraints are used.}}   
In the special case of linear systems, the proposed MPC formulation reduces to a quadratic program, and the offline design and online computational complexity are only marginally increased compared to a nominal design. 
Numerical examples demonstrate benefits compared to state-of-the-art soft-constrained MPC formulations.
\end{abstract}
\end{frontmatter} 
%!TEX root = ./Soft.tex
%%%%%%%%%%%%%%%%%%%%%%%%%%%%%%%%%%%%%%%%%%%%%%%%%%%%%%%%%%%%%%%%%%%%%%%%%%%%%%%
\section{Introduction}
Model predictive control (MPC) is an optimization-based control technique that can ensure  asymptotic stability, near-optimality, and satisfaction of state and input constraints for nonlinear systems~\cite{rawlings2017model,grune2017nonlinear}. 
These properties typically rely \change{on} constrained optimization problems, which can lead to a loss of feasibility during run-time, e.g., due to unexpected disturbances. As a result, practical application requires a separate mechanism to handle infeasibility during online operation~\cite{vada2001linear}. 
Constraints are often relaxed and replaced by a suitable penalty on constraint violation, which is commonly referred to as \textit{soft-constrained MPC}~\cite{de1994constraint,zheng1995stability,scokaert1999feasibility,kerrigan2000soft}.
However, even if soft state constraints are considered, the presence of the terminal set constraint can still result in feasibility issues and a naive softening of this constraint can yield stability issues. 
We propose an MPC formulation that ensures \change{recursive feasibility and robust stability against unbounded} disturbances while retaining close-to-optimal performance and constraint satisfaction under nominal conditions.

\textit{Related work:} 
%terminal
A standard MPC design for linear systems uses the linear quadratic regulator (LQR) to define a terminal penalty and an invariant terminal set~\cite{rawlings2017model}. 
This design yields infinite-horizon optimal performance when choosing a sufficiently large prediction horizon, depending on the initial condition~\cite{sznaier1987suboptimal,chmielewski1996constrained,scokaert1998constrained}. 
The feasible set can be increased by relaxing the state constraints using soft constraints and partially relaxing the terminal set constraint~\cite{zeilinger2014soft,wabersich2021soft,rakovic2021model,chatzikiriakos2024learning}. 
Similarly, for nonlinear systems, the linearization can be used to compute a terminal penalty using the LQR~\cite{chen1998quasi}, resulting in locally approximately optimal controllers~\cite[Thm. 6.4]{grune2008infinite}. 
Even if state constraints are relaxed using penalties, these formulations only ensure robust stability for sufficiently small disturbances~\cite{manzano2020robust,yu2014inherent,allan2017inherent}. 
\change{Overall, while these approaches can achieve close-to-optimal performance in nominal conditions, closed-loop guarantees only hold for certain initial conditions and small disturbances, which can lead to feasibility issues during operation.}

%UCON
For a certain class of nonlinear systems, MPC formulations without terminal constraints can ensure semi-global asymptotic stability with a sufficiently large prediction horizon~\cite[Chap. 6]{grune2017nonlinear}.
However, these MPC formulations only approach optimal performance if the prediction horizon tends to infinity. 
In addition, the presence of state constraints leads to additional complications and theoretical guarantees in combination with soft constraints are not available.

%global stable
For linear stable systems without state constraints, a globally stabilizing MPC can be designed using the infinite-horizon open-loop cost as a terminal penalty~\cite[Thm.~1]{rawlings1993stability}. 
\ifbool{arxiv}{To account for state constraints, \cite{zheng1995stability} penalizes the peak constraint violation over the infinite-horizon, which preserves global stability. 
In \cite{scokaert1999feasibility}, standard point-wise in time constraint violations are penalized, but the corresponding infinite-horizon terminal penalty increases the computational complexity. 
Using the fact that the penalty on constraint violation is quadratically bounded, a quadratic terminal penalty ensuring stability can be constructed \cite{feller2016relaxed}, see also \cite{oancea2023relaxed}. These MPC formulations ensure robust stability for arbitrary initial conditions and disturbances. However, they also lead to severe performance degradation during nominal operation due to the conservative terminal penalties based on open-loop stability.}{
This approach has been extended to also consider soft state constraints using different penalties~\cite{zheng1995stability,scokaert1999feasibility,feller2016relaxed,oancea2023relaxed}. However, these approaches all  lead to severe performance degradation during nominal operation due to the conservative terminal penalties based on open-loop stability.}

%RMPC 
\change{Loss of feasibility can also be avoided by using \textit{robust} MPC designs~\cite{kouvaritakis2016model}, however, they are not applicable to unbounded disturbances. 
Some \emph{stochastic} approaches can deal with unbounded disturbances by relying on probabilistic assumptions~\cite{mesbah2018stochastic}.
Both robust and stochastic MPC formulations can lead to performance deteriorations under nominal operation. 
The presented methodology has some relation to recent (stochastic) MPC formulations that use a relaxed interpolating initial state constraint~\cite{schluter2022stochastic,kohler2022recursively}. 
However, the proposed penalty-based relaxation differs by providing nominal MPC performance guarantees and inherent robustness to unbounded disturbances.}

Overall, the authors are not aware of any finite-horizon MPC framework that ensures both: robust stability under \change{unbounded} disturbance and near-optimal performance, even for linear stable systems without state constraints. 

\textit{Contribution:} 
%\JK{clarify that the methodology only applies to open-loop stable system (or needs to allow for input constraint violations)}
%general approach
In this paper, we propose to relax the initial state constraint in MPC by penalizing the difference between the measured state and a nominally optimized initial state using a weighted (incremental) Lyapunov function. 
\change{To address unbounded disturbances with compact input constraints, we focus on open-loop stable systems; however, the method applies equally to stabilizable systems when soft input constraints are used.} 
The proposed MPC formulation ensures the following properties: 
\begin{itemize}
\item input-to-state stability (ISS, cf.~\cite{jiang2001input}), even for \change{unbounded} disturbances; 
\item a suitable bound on the cumulative constraint violation;
% (approximately) bounded by the distance of the initial state to the original feasible set and the energy of the disturbances;
%state
\item close-to-optimal performance under nominal operating conditions. 
%\item exact recovery of the nominal MPC on the feasible set under additional conditions.
\end{itemize}
In contrast to existing soft-constrained MPC formulations, the proposed formulation uniquely combines inherent robustness to \change{unbounded} disturbances with the nominal optimality guarantees of MPC.

\textit{Outline:} 
We first introduce the problem setup (Sec.~\ref{sec:prelim}) and the proposed approach (Sec.~\ref{sec:proposed}). 
Then, we analyse the theoretical properties (Sec.~\ref{sec:theory}). 
Lastly, we provide a qualitative comparison to soft-constrained MPC formulations for linear systems (Sec.~\ref{sec:linear}) 
and  highlight quantitative benefits in numerical examples (Sec.~\ref{sec:num}). 

\ifbool{arxiv}{The main exposition in this paper assumes that the system is open-loop asymptotically stable and an incremental Lyapunov function is known. 
Appendix~\ref{app:stability} shows how this restriction can be relaxed to Lyapunov stable systems, stabilizable systems, and implicit characterizations of the Lyapunov function. 
Appendix~\ref{app:robust} provides a robustified design that ensures constraint satisfaction for a specified disturbance magnitude while maintaining feasibility and stability guarantees even under unbounded disturbances. 
Appendix~\ref{app:num} contains details concerning the numerical examples.
}{} 

\textit{Notation:}
The interior of a set $\mathbb{X}$ is denoted by $\mathrm{int}(\mathbb{X})$. 
The set of integers in the interval $[a,b]\subseteq\mathbb{R}$ are denoted by $\mathbb{I}_{[a,b]}$. 
By $\mathcal{K}$, we denote the set of continuous functions $\alpha:\mathbb{R}_{\geq 0}\rightarrow\mathbb{R}_{\geq 0}$ that are strictly increasing and satisfy $\alpha(0)=0$. Functions $\alpha\in\mathcal{K}$ that are also radially unbounded are class $\mathcal{K}_\infty$ functions.  
Functions $\delta:\mathbb{I}_{\geq 0}\rightarrow\mathbb{R}_{\geq 0}$ which are continuous, decreasing, and satisfy $\lim_{k\rightarrow\infty}\delta(k)=0$ are denoted by $\delta\in\mathcal{L}$. 
By $\mathcal{KL}$ we denote functions $\beta:\mathbb{R}_{\geq 0}\times\mathbb{I}_{\geq 0}\rightarrow\mathbb{R}_{\geq 0}$ with $\beta(\cdot,k)\in\mathcal{K}$ and $\beta(r,\cdot)\in\mathcal{L}$ for any fixed $k\in\mathbb{I}_{\geq 0}$, $r\in\mathbb{R}_{\geq 0}$. 
%The quadratic norm with respect to a positive definite matrix $Q=Q^\top$ is denoted by $\|x\|_Q^2=x^\top Q x$. 
The infinity norm of a vector $x\in\mathbb{R}^n$ is denoted by $\|x\|_\infty$. 
\change{For a positive definite matrix $Q=Q^\top\in\mathbb{R}^n$ and a vector $x\in\mathbb{R}^n$, we denote $\|x\|_Q:=\sqrt{x^\top Qx}$.} 
The maximal and minimal eigenvalue of a symmetric matrix $Q=Q^\top\in\mathbb{R}^{n\times n }$ \change{are} denoted by $\sigma_{\max}(Q)$ and $\sigma_{\min}(Q)$, respectively. 
The point-to-set distance for a set $\mathcal{A}\subset\mathbb{R}^n$ and a vector $x\in\mathbb{R}^n$ is denoted by $\|x\|_{\mathcal{A}}:=\inf_{s\in\mathcal{A}}\|x-s\|$.

%!TEX root = ./Soft.tex
%%%%%%%%%%%%%%%%%%%%%%%%%%%%%%%%%%%%%%%%%%%%%%%%%%%%%%%%%%%%%%%%%%%%%%%%%%%%%%%
\section{Problem setup} 
\label{sec:prelim} 
We consider the following nonlinear discrete-time system
\begin{align}
\label{eq:sys}
x(k+1)=f(x(k),u(k))+w(k),\quad x(0)=x_0,
\end{align}
with state $x(k)\in\mathbb{R}^n$, control input $u(k)\in\mathbb{U}\subseteq\mathbb{R}^m$, disturbances $w(k)\in\mathbb{R}^n$, time $k\in\mathbb{I}_{\geq 0}$, and initial state $x_0\in\mathbb{R}^n$. 
%We make no assumptions on the magnitude of the disturbances $w(k)$\change{, i.e., the disturbances are considered unknown and unbounded}. 
\change{The controller should  minimize the closed-loop performance measured by a stage cost $\ell:\mathbb{R}^n\times\mathbb{U}\rightarrow\mathbb{R}_{\geq 0}$ and drive the system to the origin.}
\begin{assumption}
\label{ass:standing} (Regularity conditions)
%( cost)
There exists a function $\alpha_\ell\in\mathcal{K}_\infty$, such that $\ell(x,u)\geq \alpha_\ell(\|x\|+\|u\|)$ for all $(x,u)\in\mathbb{R}^n\times\mathbb{U}$.
Furthermore, $f(0,0)=0$, $\ell(0,0)=0$, $0\in\mathrm{int}(\mathbb{U})$, $0\in\mathrm{int}(\mathbb{X})$, $f,\ell$ are continuous, and $\mathbb{U}$ is compact.
\end{assumption} 
\change{In addition, a desired region of operation is encoded by a closed set $\mathbb{X}\subseteq\mathbb{R}^n$. 
The controller aims to satisfy these desired state constraints, i.e., $x(k)\in\mathbb{X}$.
However, violations are acceptable, as is standard in soft-constrained MPC~\cite{de1994constraint,zheng1995stability,scokaert1999feasibility,kerrigan2000soft}. 
The disturbances $w(k)$ are considered unknown and unbounded. 
In contrast to robust MPC, we do \emph{not} leverage a known bound on the disturbances in the design.
Instead, the proposed control design provides} desirable closed-loop properties even if the disturbances are vanishingly small or extremely large, which is addressed via the concept of ISS.
\begin{definition}
\label{def:ISS}
(Input-to-state stability, ISS~\cite[Def.~3.1]{jiang2001input})
A system $x(k+1)=f_{\mathrm{cl}}(x(k),w(k))$ is ISS if there exist functions $\beta\in\mathcal{KL}$, $\gamma\in\mathcal{K}$, such that for any initial condition $x(0)\in\mathbb{R}^n$ and any disturbance sequence $w(k)\in\mathbb{R}^n$, $k\in\mathbb{I}_{\geq 0}$, and all times $k\in\mathbb{I}_{\geq 0}$ it holds that
\begin{align}
\label{eq:ISS_def}
\|x(k)\|\leq \beta(\|x(0)\|,k)+\gamma\left(\max_{j\in\mathbb{I}_{[0,k-1]}}\|w(j)\|\right).
\end{align}
\end{definition} 
%nominal MPC
%
\textit{Nominal MPC:} For a given initial state $x\in \mathbb{R}^n$ and input sequence $\mathbf{u}\in\mathbb{U}^N$, we denote the solution to~\eqref{eq:sys} with $w\equiv 0$ (nominal prediction) after $k$ steps by $x_{\mathbf{u}}(k,x)\in \mathbb{R}^n$, $k\in\mathbb{I}_{[0,N]}$ with $x_{\mathbf{u}}(0,x)=x$. Furthermore, for $\mathbf{u}\in\mathbb{U}^N$, $\mathbf{u}_k\in\mathbb{U}$ denotes the $k$-th element in the sequence with $k\in\mathbb{I}_{[0,N-1]}$. 
%, analogous to the textbook~\cite{grune2017nonlinear}. 
For a given prediction horizon $N\in\mathbb{I}_{\geq 1}$, we consider the following finite-horizon cost
\begin{align*}
\mathcal{J}_N(x,{\mathbf{u}}):=\sum_{k=0}^{N-1}\ell(x_{\mathbf{u}}(k,x),\mathbf{u}_k)+V_{\mathrm{f}}(x_{\mathbf{u}}(N,x)),
\end{align*}
where $V_{\mathrm{f}}:\mathbb{R}^n\rightarrow\mathbb{R}_{\geq 0}$ is a continuous terminal penalty. 
Given a measured state $x\in\mathbb{R}^n$, the nominal MPC is defined by the following optimization problem
\begin{align}
\label{eq:MPC_nom}
V_{\nom}(x)=&\min_{\mathbf{u}\in\mathbb{U}^N}\mathcal{J}_N(x,\mathbf{u})\\ 
\text{s.t. }&x_{\mathbf{u}}(k,x)\in\mathbb{X},~ k\in\mathbb{I}_{[0,N-1]},~x_{\mathbf{u}}(N,x)\in\mathbb{X}_{\mathrm{f}},\nonumber
\end{align}
with a (closed) terminal set $\mathbb{X}_{\mathrm{f}}\subseteq\mathbb{X}$. %, which contains the origin. 
Problem~\eqref{eq:MPC_nom} admits a minimizer as $\mathcal{J}_N$ is continuous and $\mathbb{U}$ compact~\cite[Prop.~2.4]{rawlings2017model}. 
\change{Without loss of generality, we choose a unique minimizer, denoted by  $\mathbf{u}^*(x)\in\mathbb{U}^N$.} 
The corresponding control law is given by applying the first element of the open-loop optimal input sequence, i.e., $\pi_{\nom}(x):=\mathbf{u}^*_0(x)\in\mathbb{U}$. 
The set of states $x$ for which~\eqref{eq:MPC_nom} is feasible is the feasible set $\mathbb{X}_N\subseteq\mathbb{X}$. 
We consider the following standard conditions on the terminal penalty $V_{\mathrm{f}}$ and the terminal set $\mathbb{X}_{\mathrm{f}}$~\cite{rawlings2017model}. 
\begin{assumption} (Terminal ingredients)
\label{ass:terminal}
There exists a terminal control law $k_{\mathrm{f}}:\mathbb{X}_{\mathrm{f}}\rightarrow\mathbb{U}$ and a function $\alpha_{\mathrm{f}}\in\mathcal{K}_\infty$, such that for any $x\in\mathbb{X}_{\mathrm{f}}$, we have:
\begin{itemize}
\item Terminal penalty: 
$V_{\mathrm{f}}(x^+)\leq V_{\mathrm{f}}(x)-\ell(x,u)$;
\item Constraint satisfaction: $(x,u)\in\mathbb{X}\times\mathbb{U}$;
\item Positive invariance: $x^+\in\mathbb{X}_{\mathrm{f}}$;
\item Weak controllability: $V_{\mathrm{f}}(x)\leq \alpha_{\mathrm{f}}(\|x\|)$, $0\in\mathrm{int}(\mathbb{X}_{\mathrm{f}})$, $\mathbb{X}_N$ is compact;
\end{itemize}\vspace{-2mm}
with $u=k_{\mathrm{f}}(x)$ and $x^+=f(x,u)$. 
\end{assumption}
Standard design procedures to constructively satisfy these conditions can be found in~\cite{rawlings2017model,chen1998quasi}. 
The following lemma ensures that this nominal MPC provides suitable guarantees under nominal operating conditions.
\begin{lemma}
\label{lemma:MPC}\cite[Sec.~2.4.2]{rawlings2017model}
Let Assumptions~\ref{ass:standing} and \ref{ass:terminal} hold. 
There exist functions $\alpha_1,\alpha_2\in\mathcal{K}_\infty$, such that for any $x\in\mathbb{X}_N$ it holds that
\begin{subequations}
\label{eq:nom_MPC_Lyap}
\begin{align}
\label{eq:nom_MPC_Lyap_1}
\alpha_1(\|x\|)\leq& V_{\nom}(x)\leq \alpha_2(\|x\|),\\
\label{eq:nom_MPC_Lyap_2}
V_{\nom}(f(x,\pi_{\nom}(x))\leq &V_{\nom}(x)-\ell(x,\pi_{\nom}(x)).
\end{align}
\end{subequations}
Suppose further that $x_0\in\mathbb{X}_N$ and $w\equiv 0$. Then,  the closed-loop system~\eqref{eq:sys} with $u(k)=\pi_{\nom}(x(k))$ ensures that Problem~\eqref{eq:MPC_nom} is recursively feasible, the constraints are satisfied, i.e., $x(k)\in\mathbb{X}$, $u(k)\in\mathbb{U}$ $\forall k\in\mathbb{I}_{\geq 0}$, $x=0$ is asymptotically stable, and the following performance bound holds:
\begin{align}
\label{eq:nom_MPC_performance}
\sum_{k=0}^\infty\ell(x(k),u(k))\leq V_{\nom}(x_0).  
\end{align} 
\end{lemma}
\begin{comment}
\ifbool{arxiv}{\begin{pf}
Inequalities~\eqref{eq:nom_MPC_Lyap} follow from Assumption~\ref{ass:terminal}, see~\cite[Sec.~2.4.2]{rawlings2017model}. Asymptotic stability follows since $V_{\nom}$ is a Lyapunov function for the nominal closed-loop system~\cite[Thm.~2.19]{rawlings2017model}. 
The performance bound~\eqref{eq:nom_MPC_performance} follows from~\eqref{eq:nom_MPC_Lyap_2} using a telescopic sum and $V_{\nom}\geq 0$. 
%The lower bound in~\eqref{eq:nom_MPC_Lyap_1} holds with $\alpha_1=\alpha_\ell\in\mathcal{K}_\infty$ and the upper bound follows from~\cite[Sec.~2.4.2]{rawlings2017model} using Assumption~\ref{ass:terminal}. 
Lastly, suppose $\alpha_\ell(r)=c_\ell r^2$, $\alpha_{\mathrm{f}}(r)=c_{\mathrm{f}} r^2$ with some positive constants $c_\ell,c_{\mathrm{f}}>0$. 
Then, $\alpha_1(r)=\alpha_\ell(r)=c_\ell r^2$ is a valid quadratic lower bound. 
Furthermore, it holds that $V_{\nom}(x)\leq c_{\mathrm{f}}\|x\|^2$ for $x\in\mathbb{X}_{\mathrm{f}}$~\cite[Prop.~2.16]{rawlings2017model}
and $V_\nom(x)\leq \bar{V}$ $\forall x\in\mathbb{X}_N$ with $\bar{V}=\max_{x\in\mathbb{X}_N, \mathbf{u}\in\mathbb{U}^N}\mathcal{J}_N(x,\mathbf{u})<\infty$ using $\mathbb{X}_N,\mathbb{U}$ compact, and $\mathcal{J}_N$ continuous. 
Thus, the upper bound~\eqref{eq:nom_MPC_Lyap_1} holds with $\alpha_2(r)=c_2 r^2$, $c_2=\max\left\{c_{\mathrm{f}},\frac{\bar{V}}{\inf_{x\notin \mathbb{X}_{\mathrm{f}}}\|x\|^2}\right\}$, where $c_2<\infty$ using  $0\in\mathrm{int}(\mathbb{X}_{\mathrm{f}})$.$\hfill\square$
\end{pf}}{}
\end{comment}
%
\begin{remark}(Optimality of nominal MPC)
\label{rk:local_opt}
Inequality~\eqref{eq:nom_MPC_performance} characterizes the performance of nominal MPC. 
%Given some technical conditions,  this implies a performance bound compared to the infinite-horizon optimal solution~\cite{grune2017nonlinear,grune2008infinite,kohler2021dynamic}. 
%In the special case of linear dynamics, standard MPC designs can also exactly recover the optimal constrained LQR for a large set of initial conditions~\cite{sznaier1987suboptimal,chmielewski1996constrained,scokaert1998constrained}. 
%For nonlinear systems, we can similarly approximately recover the optimal solution \textit{locally}~\cite{rawlings2017model,chen1998quasi}. 
%Overall, 
Under an appropriate design of $V_{\mathrm{f}},\mathbb{X}_{\mathrm{f}}$, this performance bound ensures that the nominal MPC is locally close-to-optimal, for details see~\cite[Thm.~8]{magni2001stabilizing}, \cite[Thm.~6.2/6.4]{grune2008infinite}, \cite[Thm.~5.22]{grune2017nonlinear}, \cite[App.~A]{kohler2021dynamic}.  
\end{remark} 
%!TEX root = ./Soft.tex
%%%%%%%%%%%%%%%%%%%%%%%%%%%%%%%%%%%%%%%%%%%%%%%%%%%%%%%%%%%%%%%%%%%%%%%%%%%%%%%
\section{Proposed formulation}
\label{sec:proposed}
Under nominal operating conditions, i.e., $x_0\in\mathbb{X}_N$ and $w\equiv 0$, the closed-loop properties provided by the nominal MPC in Lemma~\ref{lemma:MPC} (stability, performance, constraint satisfaction) are satisfactory. 
Our main goal is to ensure that comparable closed-loop properties are preserved  under \change{unbounded disturbances} $w(k)\neq 0$ and for general initial states $x_0\notin\mathbb{X}_N$. 
\change{Given a state $x\in\mathbb{R}^n$, the proposed MPC formulation determines a control input using the following optimization problem} 
%the proposed MPC formulation is given by the following optimization problem 
\begin{align}
\label{eq:MPC_proposed}
V_{\slack}(x)=&\min_{\bar{x}\in\mathbb{R}^n,{\mathbf{u}}\in\mathbb{U}^N}\mathcal{J}_N(\bar{x},{{\mathbf{u}}})+\lambda V_\delta(x,\bar{x})\\
\text{s.t. }&x_{{\mathbf{u}}}(k,\bar{x})\in\mathbb{X},~ k\in\mathbb{I}_{[0,N-1]},~ x_{{\mathbf{u}}}(N,\bar{x})\in\mathbb{X}_{\mathrm{f}},\nonumber
\end{align}
where $\lambda>0$ is a tunable weight and $V_\delta$ is a suitable distance function specified later. 
Compared to the nominal MPC~\eqref{eq:MPC_nom}, we relax the initial state constraint $x=\bar{x}$ by optimizing over a nominal initial state $\bar{x}$ and adding a penalty. 
Computational complexity is only mildly increased due to the additional decision variable $\bar{x}\in\mathbb{R}^n$ and the evaluation of the nonlinear function $V_\delta$. 
We denote a minimizing\footnote{%
A minimizer exists, since the cost is radially unbounded in both $\bar{x}$ and $\mathbf{u}$~\cite[Prop.~2.4]{rawlings2017model}, given Assumptions~\ref{ass:standing} and \ref{ass:increm_stab}.}
 nominal state and input sequence by $\bar{x}_{\slack}^*(x)\in\mathbb{X}_N$ and ${\mathbf{u}}_{\slack}^*(x)\in\mathbb{U}^N$. 
The resulting closed-loop system is given by
\begin{align}
\label{eq:closed_loop}
x(k+1)=&f(x(k),u(k))+w(k),\nonumber\\
u(k)=&\pi_{\slack}(x(k)):=\mathbf{u}^*_{\slack,0}(\change{x(k)})\in\mathbb{U}. 
\end{align}
Given bounded control inputs $\mathbb{U}$ and \change{unbounded} disturbances $w$, robust stability properties can in general only be \change{obtained} for open-loop asymptotically stable systems.  
Hence, we restrict ourselves to open-loop stable systems, which are characterized by a known incremental Lyapunov function $V_\delta(x,z)$ ~\ifbool{arxiv}{\cite{angeli2002lyapunov,tran2019convergence}}{\cite{angeli2002lyapunov}}.
\begin{assumption}
\label{ass:increm_stab}
(\change{Open-loop stability})
The  function $V_\delta:\mathbb{R}^n\times\mathbb{R}^n\rightarrow\mathbb{R}_{\geq 0}$ is continuous and there exist functions $\alpha_{\delta,1},\alpha_{\delta,2},\alpha_{\delta,3},\alpha_{\delta,4}\in\mathcal{K}_\infty$, such that for all 
$x,z,w\in\mathbb{R}^n$, and all $u\in\mathbb{U}$, we have
\begin{subequations}
\label{eq:increm_Lyap}
\begin{align}
\label{eq:increm_Lyap_1}
&\alpha_{\delta,1}(\|x-z\|)\leq V_{\delta}(x,z)\leq \alpha_{\delta,2}(\|x-z\|),\\
\label{eq:increm_Lyap_2}
&V_{\delta}(f(x,u)+w,f(z,u))-V_{\delta}(x,z)\nonumber\\
\leq &-\alpha_{\delta,3}(\|x-z\|)+\alpha_{\delta,4}(\|w\|). 
\end{align}
\end{subequations}
\end{assumption} 
Inequalities~\eqref{eq:increm_Lyap} with $w=0$ are an equivalent characterization of asymptotic incremental stability~\ifbool{arxiv}{\cite{angeli2002lyapunov,tran2019convergence}}{\cite{angeli2002lyapunov}}, i.e., different initial conditions subject to the same control input asymptotically converge to the same state trajectory. 
For $w\neq 0$, Condition~\eqref{eq:increm_Lyap_2} additionally ensures incremental ISS w.r.t. disturbances $w$ (cf.~\cite{bayer2013discrete,angeli2002lyapunov}).  

\change{\begin{remark}(Open-loop stability \& Lyapunov function)
The proposed design~\eqref{eq:MPC_proposed} requires a known Lyapunov function $V_\delta$ certifying open-loop exponential stability (Asm.~\ref{ass:increm_stab}). 
\ifbool{arxiv}{In Appendix~\ref{app:Lyap_stable}, we show that semi-global asymptotic stability can still be ensured for Lyapunov stable systems in the absence of disturbances.} 
For unstable systems with unbounded disturbances, meaningful closed-loop properties can only be derived if the system is \textit{stabilizable} and input constraints $\mathbb{U}$ can be violated (cf.~\cite{feller2016relaxed,oancea2023relaxed}). 
In \ifbool{arxiv}{Appendix~\ref{app:input}}{\cite[A.3]{JK_Soft_extended}}, we show that the proposed design and theoretical analysis can be naturally generalized in this direction by removing the open-loop stability assumption. 
In addition, the design in Problem~\eqref{eq:MPC_proposed} requires a \textit{known} incremental Lyapunov function $V_\delta$. 
For linear systems, a quadratic function $V_\delta(x,\bar{x})=\|x-\bar{x}\|_P^2$ is trivially obtained using a Lyapunov equation (Sec.~\ref{sec:linear}). 
For nonlinear systems, quadratically bounded functions $V_\delta$ can be computed using \textit{contraction metrics}~\cite{manchester2017control,singh2023robust}. 
%In~\cite{JK_Soft_extended}, we also show that  we can also implicitly define $V_\delta$ using a finite-horizon prediction.}
To ease implementation, \ifbool{arxiv}{Appendix~\ref{app:implicit}}{\cite[A.1]{JK_Soft_extended}} provides an implicit characterization of $V_\delta$ using finite-horizon predictions,  assuming \textit{exponential} stability.
\end{remark}}

\change{\begin{remark}(Practical guideline for weight $\lambda>0$)
\label{rk:weight_lambda}
Suppose we have a nominal MPC that performs well under typical operating conditions. 
Our goal is to maintain this performance, while also ensuring reliable operation in the rare case of extreme disturbances. 
In this case, we recommend choosing a sufficiently large penalty $\lambda>0$, such that the resulting policy is virtually indistinguishable from a nominal MPC on the feasible set $\mathbb{X}_N$. %, while avoiding ill-conditioned optimization problems.
This guideline is informed by the theoretical guarantees on stability, constraint satisfaction, and performance in Theorems~\ref{thm:global}--\ref{thm:performance} in the next section. 
\end{remark}}

\change{Intuitively, Problem~\eqref{eq:MPC_proposed} introduces a relaxation $x-\bar{x}$ on the initial state constraint. 
The following theoretical analysis establishes that:
\begin{itemize}
\item  this relaxation decays to zero if the disturbances decay (cf. Thm.~\ref{thm:global});
\item on the feasible set $x\in\mathbb{X}_N$, the relaxation is arbitrarily small for sufficiently large penalties $\lambda>0$ (cf. Thm.~\ref{thm:performance}, Eq.~\eqref{eq:performance_telescopic}).
\end{itemize}}

%!TEX root = ./Soft.tex
%%%%%%%%%%%%%%%%%%%%%%%%%%%%%%%%%%%%%%%%%%%%%%%%%%%%%%%%%%%%%%%%%%%%%%%%%%%%%%%
\section{Closed-loop theoretical analysis:\\ISS, constraint satisfaction, and performance}
\label{sec:theory}
In the following, we provide the theoretical analysis. 
 First, Lemma~\ref{lemma:equiv_relaxed} shows that Problem~\eqref{eq:MPC_proposed} can be analysed as a weighted projection on the feasible set $\mathbb{X}_N$. 
Theorem~\ref{thm:global} ensures ISS. 
Then, suitable bounds for the closed-loop constraint violation and performance are derived in Theorems~\ref{thm:constraints} and \ref{thm:performance}. 
 %exact
%!TEX root = ./Soft.tex
%%%%%%%%%%%%%%%%%%%%%%%%%%%%%%%%%%%%%%%%%%%%%%%%%%%%%%%%%%%%%%%%%%%%%%%%%%%%%%%
\subsection{Weighted projection}
\label{sec:theory_projection}
For the following analysis, we repeatedly utilize the fact that Problem~\eqref{eq:MPC_proposed} is equivalent to
\begin{align}
\label{eq:MPC_equ}
V_{\slack}(x)=&\min_{\bar{x}\in\mathbb{X}_{N}}  V_{\nom}(\bar{x})+\lambda V_\delta(x,\bar{x}).
\end{align}
\begin{lemma}
\label{lemma:equiv_relaxed}
For all $x\in\mathbb{R}^n$, it holds that:
\begin{itemize}
\item Problem~\eqref{eq:MPC_proposed} and Problem~\eqref{eq:MPC_equ}  have the same minimizer $\bar{x}_{\slack}^*(x)$ and the same minimum $V_{\slack}(x)$;
\item
 $\pi_{\slack}(x)=\pi_{\nom}(\bar{x}_{\slack}^*(x))$. 
\end{itemize}
\end{lemma}
\begin{pf}
The constraints in Problem~\eqref{eq:MPC_proposed} on ($\bar{x},\mathbf{u}$) are equivalent to the constraints in Problem~\eqref{eq:MPC_nom} and hence to $\bar{x}\in\mathbb{X}_N$. 
Furthermore, any minimizer $\mathbf{u}_{\slack}^*$ in Problem~\eqref{eq:MPC_proposed} is a minimizer in Problem~\eqref{eq:MPC_nom} with initial condition $x=\bar{x}$, i.e., $\mathbf{u}_{\slack}^*(x)=\mathbf{u}^*(\bar{x}_{\slack}^*(x))$. This implies  $\pi_{\slack}(x)=\pi_{\nom}(\bar{x}_{\slack}^*(x))$. 
Lastly, $\mathcal{J}_N(\bar{x}_{\slack}^*(x),{\mathbf{u}}_{\slack}^*(x))=V_{\nom}(\bar{x}_{\slack}^*(x))$ implies that the same cost is minimized, resulting in the same minimizer $\bar{x}_{\slack}^*(x)$ and minimum $V_{\slack}(x)$.  $\hfill\square$
\end{pf}
Problem~\eqref{eq:MPC_equ} can be viewed as a (weighted) projection of
the state $x$ on the feasible set $\mathbb{X}_N$, which simplifies the analysis of the proposed MPC formulation.  
In case the nominal value function $V_{\nom}$ is Lipschitz continuous and $\alpha_{\delta,1}$ is linear, then a sufficiently large weight $\lambda$ would ensure that Problem~\eqref{eq:MPC_proposed} yields the same solution as the nominal MPC~\eqref{eq:MPC_nom} on the feasible set $\mathbb{X}_N$~\cite[Thm.~1]{rosenberg1984exact}. 
However, the value function $V_{\nom}$ is in general discontinuous~\cite{grimm2004examples} and hence we pursue a different analysis.
 
%global stability
%!TEX root = ./Soft.tex
%%%%%%%%%%%%%%%%%%%%%%%%%%%%%%%%%%%%%%%%%%%%%%%%%%%%%%%%%%%%%%%%%%%%%%%%%%%%%%% 
\subsection{Input-to-state stability}% for arbitrary disturbances}}%{Global input-to-state stability}
\label{sec:theory_1}
%For $\lambda\rightarrow\infty$, Problem~\eqref{eq:MPC_equ} can be interpreted as a weighted projection on the feasible set $\mathbb{X}_N$.} 
The following theorem characterizes the robust stability properties of the proposed MPC formulation.
\begin{theorem}
\label{thm:global}
Let Assumptions~\ref{ass:standing}, \ref{ass:terminal}, and \ref{ass:increm_stab} hold. 
Then, the closed-loop system~\eqref{eq:closed_loop} is ISS (Def.~\ref{def:ISS}) and $V_{\slack}$ is a continuous ISS Lyapunov function. 
\end{theorem}
\begin{pf}
%feasible, upper bound
We show that there exist functions $\tilde{\alpha}_1$, $\tilde{\alpha}_2$, $\tilde{\alpha}_3$, $\tilde{\alpha}_4\in\mathcal{K}_\infty$, such that for any $x,w\in\mathbb{R}^n$: 
\begin{subequations}
\label{eq:Lyap_joint}
\begin{align}
\label{eq:Lyap_joint_1} 
\tilde{\alpha}_{1}(\|x\|)\leq& V_{\slack}(x)\leq \tilde{\alpha}_2(\|x\|),\\
\label{eq:Lyap_joint_2}
&V_{\slack}(f(x,\pi_{\slack}(x))+w)-V_{\slack}(x)\nonumber\\
\leq &-\tilde{\alpha}_3(\|x\|)+\tilde{\alpha}_4(\|w\|).
\end{align}
\end{subequations}
%In~\cite{jiang2001input}, ISS Lyapunov functions are assumed to be continuous, which is, however, not needed in the proof.
\textbf{Part I:} 
Note that $0\in\mathbb{X}_{\mathrm{f}}$ and $V_{\mathrm{f}}(0)=0$ ensure $0\in\mathbb{X}_N$ and $V_{\nom}(0)=0$. 
%Note that $0\in\mathbb{X}_{\mathrm{f}}\subseteq\mathbb{X}$, $0\in\mathbb{U}$, and $f(0,0)=0$ ensure that the nominal MPC~\eqref{eq:MPC_nom} satisfies $0\in\mathbb{X}_N$ and $V_{\nom}(0)=0$. 
Thus, for all $x\in\mathbb{R}^n$, $\bar{x}=0$ is feasible candidate solution to Problem~\eqref{eq:MPC_equ}, ensuring $V_{\slack}(x)\leq \lambda V_\delta(x,0)\leq \lambda \alpha_{2,\delta}(\|x\|)$. 
Thus, \eqref{eq:Lyap_joint_1} holds with $\tilde{\alpha}_2:=\lambda\cdot\alpha_{2,\delta}\in\mathcal{K}_\infty$. \\
%lower bound
\textbf{Part II:} 
Define $\tilde{\alpha}_1(c):=\min\{\alpha_1(c/2),\lambda\alpha_{1,\delta}(c/2)\}$, $c\geq 0$, which satisfies $\tilde{\alpha}_1\in\mathcal{K}_\infty$~\cite{kellett2014compendium}. 
Using the weak triangle inequality (cf.~\cite[Eq.~(8)]{kellett2014compendium})
\begin{align}
\label{eq:weak_triang}
\alpha(a+b)\leq \alpha(2a)+\alpha(2b), ~ \forall a,b\geq 0,~ \alpha\in\mathcal{K}_\infty,
\end{align}
the lower bound in Inequality~\eqref{eq:Lyap_joint_1} holds with
\begin{align*}
V_{\slack}(x)\stackrel{\eqref{eq:MPC_equ}}{=}&V_{\nom}(\bar{x}_{\slack}^*(x))+\lambda V_\delta(x,\bar{x}_{\slack}^*(x))\\
\stackrel{\eqref{eq:nom_MPC_Lyap_1},\eqref{eq:increm_Lyap_1}}\geq &\alpha_1(\|\bar{x}_{\slack}^*(x)\|)+ \lambda\alpha_{\delta,1}(\|x-\bar{x}_{\slack}^*(x)\|)\\
\geq &\tilde{\alpha}_1(2\|\bar{x}_{\slack}^*(x)\|)+\tilde{\alpha}_1(2\|x-\bar{x}_{\slack}^*(x)\|)\\
\stackrel{\eqref{eq:weak_triang}}{\geq} &\tilde{\alpha}_1(\|\bar{x}_{\slack}^*(x)\|+\|x-\bar{x}_{\slack}^*(x)\|)
\geq \tilde{\alpha}_1(\|x\|).
\end{align*} 
\textbf{Part III: } Abbreviate $\bar{x}=\bar{x}_{\slack}^*(x)$, $\bar{x}^+=f(\bar{x},u)$, $x^+=f(x,u)+w$, and $u=\pi_{\slack}(x)=\pi_{\nom}(\bar{x})$ (Lemma~\ref{lemma:equiv_relaxed}). 
We use $\bar{x}^+$ as a feasible candidate solution in~\eqref{eq:MPC_equ}, yielding 
\begin{align}
\label{eq:global_decrease_1}
&V_{\slack}(x^+)\stackrel{\eqref{eq:MPC_equ}}{\leq} V_{\nom}(\bar{x}^+)+\lambda V_\delta(x^+,\bar{x}^+)\\
\stackrel{\eqref{eq:nom_MPC_Lyap_2},\eqref{eq:increm_Lyap_2}}{\leq}&V_{\nom}(\bar{x})-\ell(\bar{x},\pi_{\nom}(\bar{x}))\nonumber\\
&+\lambda (V_\delta(x,\bar{x})-\alpha_{3,\delta}(\|\bar{x}-x\|)+\alpha_{4,\delta}(\|w\|))\nonumber\\
\stackrel{\text{Asm.~\ref{ass:standing}},\eqref{eq:MPC_equ}}{\leq}&V_{\slack}(x)+\lambda\alpha_{4,\delta}(\|w\|)\nonumber\\
&-\alpha_\ell(\|\bar{x}\|)-\lambda \alpha_{3,\delta}(\|\bar{x}-x\|).\nonumber
\end{align}
Choose $\tilde{\alpha}_3(c):=\min\{\alpha_\ell(c/2),\lambda\alpha_{3,\delta}(c/2)\}$, $c\geq 0$, $\tilde{\alpha}_3\in\mathcal{K}_\infty$ and $\tilde{\alpha}_4:=\lambda\alpha_{4,\delta}\in\mathcal{K}_\infty$. 
Inequality~\eqref{eq:Lyap_joint_2} holds with 
\begin{align*}
\alpha_\ell(\|\bar{x}\|)+\lambda \alpha_{3,\delta}(\|\bar{x}-x\|)
\geq& \tilde{\alpha}_3(2\|\bar{x}\|)+\tilde{\alpha}_3(2\|\bar{x}-x\|)\\
\stackrel{\eqref{eq:weak_triang}}{\geq} \tilde{\alpha}_3(\|\bar{x}\|+\|\bar{x}-x\|)\geq& \tilde{\alpha}_3(\|x\|).&%\qedhere
\end{align*}
\textbf{Part IV: }
For any two \change{states} $x,\tilde{x}\in\mathbb{R}^n$, it holds that
\begin{align*}
V_{\slack}(x)\stackrel{\eqref{eq:MPC_equ}}{\leq} V_{\nom}(\bar{x}_{\slack}^*(\tilde{x}))+\lambda V_\delta(x,\bar{x}_{\slack}^*(\tilde{x}))\\
\stackrel{\eqref{eq:MPC_equ}}{=}V_{\slack}(\tilde{x})+\lambda (V_\delta(x,\bar{x}_{\slack}^*(\tilde{x}))-V_\delta(\tilde{x},\bar{x}_{\slack}^*(\tilde{x})). 
\end{align*}
Continuity of $V_{\slack}$ follows with $V_\delta$ continuous and $\lambda$ finite.
Inequalities~\eqref{eq:Lyap_joint} with $V_{\slack}$ continuous ensure that the closed-loop system~\eqref{eq:closed_loop} is ISS~\cite[Lemma 3.5]{jiang2001input}.$\hfill\square$
\end{pf}
Even though the value function $V_{\nom}$ of the nominal MPC~\eqref{eq:MPC_nom} is, in general, discontinuous~\cite{grimm2004examples}, 
the proposed relaxation ensures that $V_{\slack}$ is a continuous ISS Lyapunov function~\cite{jiang2001input} \change{for the closed-loop system resulting from the MPC, even under \change{unbounded} disturbances $w(k)$}. In the absence of disturbances ($w\equiv 0$), ISS also implies globally asymptotically stable.

 %constraints
%!TEX root = ./Soft.tex
%%%%%%%%%%%%%%%%%%%%%%%%%%%%%%%%%%%%%%%%%%%%%%%%%%%%%%%%%%%%%%%%%%%%%%%%%%%%%%%
\subsection{Constraint satisfaction}
For the following analysis, we strengthen our assumptions to obtain simple bounds for the cumulative constraint violation and performance. 
\begin{assumption} (Simplifying assumptions)
\label{ass:exp_bounds}
\begin{enumerate}[label=\alph*)]
\item  $\alpha_i$, $i\in\{\ell,\mathrm{f}\}$ are quadratic, i.e., $\alpha_i(r)=c_i r^2$. 
\label{ass:exp_bound_MPC}
\item  $\ell(x,u)=\|x\|_Q^2+\|u\|_R^2$ with positive definite matrices $Q,R$. 
\label{ass:exp_bound_ell_quad}
\item $\alpha_{i,\delta}$, $i\in\mathbb{I}_{[1,4]}$ are quadratic, i.e., $\alpha_{i,\delta}(r)=c_{i,\delta}r^2$. 
\label{ass:exp_bounds_alpha_delta}
\end{enumerate}
\end{assumption}
Conditions~\ref{ass:exp_bound_MPC}--\ref{ass:exp_bound_ell_quad} are naturally satisfied by standard MPC designs~\cite{rawlings2017model,chen1998quasi}.  
Condition~\ref{ass:exp_bounds_alpha_delta} requires that $V_\delta$ is an \textit{exponential} incremental Lyapunov function, which is the case for most constructive designs (cf.~\cite{manchester2017control,singh2023robust} and \ifbool{arxiv}{App.~\ref{app:implicit}}{\cite[App.~A.1]{JK_Soft_extended}}).  
%Assumption~\ref{ass:exp_bounds} also holds trivially for the constrained LQR (Sec.~\ref{sec:linear}). 
%
The following theorem provides a bound on the cumulative constraint violation. 
\begin{theorem}
\label{thm:constraints}
Let Assumptions~\ref{ass:standing}, \ref{ass:terminal}, \ref{ass:increm_stab}, and \ref{ass:exp_bounds} hold. 
There exist constants $c_{\mathbb{X},1}$, $c_{\mathbb{X},2}$, $c_{\mathbb{X},3}\geq 0$, such that for any initial conditions $x_0\in\mathbb{R}^n$, weighting $\lambda>0$, and any disturbances $w(k)\in\mathbb{R}^n$, $k\in\mathbb{I}_{\geq 0}$, \change{and any time $T\in\mathbb{I}_{\geq 0}$}, the closed-loop system~\eqref{eq:closed_loop} satisfies
%for any $\lambda>0$,
\begin{align}
\label{eq:constraint_violation}
\sum_{k=0}^{\change{T-1}}\|x(k)\|_{\mathbb{X}}^2\leq c_{\mathbb{X},1}\|x_0\|_{\mathbb{X}_N}^2+\dfrac{c_{\mathbb{X},2}}{\lambda}+c_{\mathbb{X},3}\sum_{k=0}^{\change{T-1}} \|w(k)\|^2. 
\end{align}
\end{theorem}
\begin{pf}
Given that $\alpha_\ell,\alpha_{\mathrm{f}}$ are quadratic, Lemma~\ref{lemma:MPC} and \cite[Prop.~2.16]{rawlings2017model} also imply quadratic bounds on $V_\nom$, i.e., 
 $\alpha_1(r)=c_1r^2$, $\alpha_2(r)=c_2r^2$, with $c_1=c_\ell>0$ and $$c_2:=\max\left\{c_{\mathrm{f}},\dfrac{\max_{x\in\mathbb{X}_N, \mathbf{u}\in\mathbb{U}^N}\mathcal{J}_N(x,\mathbf{u})}{\inf_{x\notin \mathbb{X}_{\mathrm{f}}}\|x\|^2}\right\}\in(0,\infty).$$ 
\change{Thus, Inequalities~\eqref{eq:nom_MPC_Lyap} yield 
\begin{align*}
V_{\nom}(\bar{x}^+)\leq \rho_{\nom}V_{\nom}(\bar{x}),
\end{align*}
with  $\rho_{\nom}=1-c_\ell/c_2\in[0,1)$.}
%Thus, Inequalities~\eqref{eq:nom_MPC_Lyap} ensure that $V_{\mathrm{nom}}$ is \textit{exponentially} contracting with $\rho_{\nom}=1-c_\ell/c_2\in[0,1)$ during nominal operation. 
\change{Similarly, $\alpha_{i,\delta}$ quadratic and~\eqref{eq:increm_Lyap} imply 
\begin{align*}
V_\delta(x^+,\bar{x}^+)\leq \rho_{\delta}V_{\delta}(x,\bar{x})+c_{4,\delta}\|w\|^2,
\end{align*}
with $\rho_{\delta}=1-c_{3,\delta}/c_{2,\delta}\in[0,1)$.} 
%an exponential contraction rate $\rho_{\delta}=1-c_{3,\delta}/c_{2,\delta}\in[0,1)$ in~\eqref{eq:increm_Lyap}. 
Applying the arguments from Inequality~\eqref{eq:global_decrease_1} yields
\begin{align}
\label{eq:Lyap_exp_ISS}
&V_{\slack}(x^+)\leq V_{\nom}(\bar{x}^+)+\lambda V_\delta(x^+,\bar{x}^+)\nonumber\\
\leq &\rho_{\nom}V_{\nom}(\bar{x})+\lambda \rho_{\delta}V_{\delta}(x,\bar{x})+c_{4,\delta}\lambda\|w\|^2\nonumber\\
\leq& \tilde{\rho}V_{\slack}(x)+c_{4,\delta}\lambda\|w\|^2,
\end{align}
with $\tilde{\rho}=\max\{\rho_\delta,\rho_{\nom}\}\in[0,1)$ independent of $\lambda>0$. 
The point-wise in time constraint violation satisfies 
\begin{align}
\label{eq:constraint_violation_1}
% \|x\|^2_{\mathbb{X}_N}
\|x\|_{\mathbb{X}}^2\leq \|x-\bar{x}\|^2\stackrel{\eqref{eq:increm_Lyap_1}}{\leq} \frac{V_\delta(x,\bar{x})}{c_{\delta,1}}\stackrel{\eqref{eq:MPC_equ}}{\leq} \frac{V_{\slack}(x)}{c_{\delta,1}\lambda}.
%\|x\|_{\mathbb{X}}^2\leq \|x-\bar{x}^*_{\slack}(x)\|^2\stackrel{\eqref{eq:increm_Lyap_1}}{\leq} \frac{V_\delta(x,\bar{x}^*_{\slack}(x))}{c_{\delta,1}}\stackrel{\eqref{eq:MPC_equ}}{\leq} \frac{V_{\slack}(x)}{c_{\delta,1}\lambda}.
\end{align}
Using Inequality~\eqref{eq:Lyap_exp_ISS} in a telescopic sum yields
\begin{align*}
&V_{\slack}(x(k))
\leq \tilde{\rho}^k V_{\slack}(x_0)+c_{\delta,4}\lambda\sum_{j=0}^{k-1}\tilde{\rho}^{k-j-1}\|w(j)\|^2.
\end{align*}
Summing up the squared constraint violation yields
\begin{align}
\label{eq:constraint_violation_2}
&c_{\delta,1}\sum_{k=0}^{\change{T-1}}\|x(k)\|_{\mathbb{X}}^2\leq \dfrac{1}{\lambda}\sum_{k=0}^{\change{T-1}} V_{\slack}(x(k))\nonumber\\
\leq& \sum_{k=0}^{\change{T-1}} \dfrac{\tilde{\rho}^k}{\lambda}V_{\slack}(x_0)
+c_{\delta,4}\sum_{k=0}^{\change{T-1}}\sum_{j=0}^{k-1}\tilde{\rho}^{k-j-1}\|w(j)\|^2\nonumber\\
\change{\leq}&\dfrac{V_{\slack}(x_0)}{\lambda(1-\tilde{\rho})}+\dfrac{c_{4,\delta}}{1-\tilde{\rho}}\sum_{j=0}^{\change{T-1}}\|w(j)\|^2.
\end{align}
Note that $\mathcal{J}_N$ continuous and $\mathbb{X}_N$, $\mathbb{U}$ compact imply a uniform upper bound $\bar{V}>0$, i.e., $V_{\nom}(\bar{x})\leq \bar{V}$ for any $\bar{x}\in\mathbb{X}_N$. 
Consider a candidate $\bar{x}\in\arg\min_{\bar{x}\in \mathbb{X}_N}\|x_0-\bar{x}\|$, which satisfies
$V_{\delta}(x_0,\bar{x})\stackrel{\eqref{eq:increm_Lyap_1}}{\leq} c_{\delta,2}\|x_0-\bar{x}\|^2= c_{\delta,2}\|x_0\|_{\mathbb{X}_N}^2$ and hence 
\begin{align*}
V_{\slack}(x_0)\leq \lambda V_{\delta}(x_0,\bar{x})+V_{\nom}(\bar{x})\leq c_{\delta,2}\lambda\|x_0\|^2_{\mathbb{X}_N}+\bar{V}.
\end{align*}
Inequality~\eqref{eq:constraint_violation} follows from~\eqref{eq:constraint_violation_2} by choosing\\\mbox{$c_{\mathbb{X},1}:=\frac{c_{\delta,2}}{(1-\tilde{\rho})c_{\delta,1}}$}, $c_{\mathbb{X},2}:=\frac{\bar{V}}{(1-\tilde{\rho})c_{\delta,1}}$, $c_{\mathbb{X},3}:=\frac{c_{4,\delta}}{(1-\tilde{\rho})c_{\delta,1}}$. $\hfill\square$
\end{pf} 
In the nominal case ($x_0\in\mathbb{X}_N$, $w\equiv 0$), Inequality~\eqref{eq:constraint_violation} ensures that the constraint violation becomes arbitrarily small as we increase the weight $\lambda$, similar to soft-constrained MPC~\cite[Cor.~2]{feller2016relaxed}. 
Furthermore, Inequality~\eqref{eq:constraint_violation} ensures that the constraint violation scales linearly with the distance of the initial state to the feasible set $\|x_0\|_{\mathbb{X}_N}^2$ and the magnitude of the disturbances $\|w(k)\|^2$.
%In addition, the corresponding gain is uniform w.r.t. the weight $\lambda$. 
Notably,  this bound is uniform w.r.t. the weight $\lambda$, i.e., even an arbitrarily large weight $\lambda$ does not result in degenerate behaviour. 
\change{This is not the case for standard soft-constrained MPC formulations; see the discussion in Section~\ref{sec:linear}.}
\begin{remark}\label{rk:robust_design}
\change{(Incorporating a robust design)
The bound~\eqref{eq:constraint_violation} is largely qualitative:  even if a disturbance bound is known, quantitative bounds on the constraint violation are challenging to characterize.  
It is possible to obtain quantitative bounds by incorporating standard robust MPC designs, such as~\cite{kouvaritakis2016model,bayer2013discrete,singh2023robust}, into the MPC formulation~\eqref{eq:MPC_proposed}. 
Thus, robust constraint satisfaction can be ensured for a user-specified disturbance bound, while recursive feasibility and robust stability remain valid also in case of unbounded disturbances, see~\ifbool{arxiv}{App.~\ref{app:robust}}{\cite[App.~B]{JK_Soft_extended}} for details.} 
\end{remark} 
 %performance
%!TEX root = ./Soft.tex
%%%%%%%%%%%%%%%%%%%%%%%%%%%%%%%%%%%%%%%%%%%%%%%%%%%%%%%%%%%%%%%%%%%%%%%%%%%%%%%
\subsection{Performance analysis}
\label{sec:theory_2}
The following theorem derives a bound on the closed-loop performance.%
\begin{theorem}
\label{thm:performance}
Let Assumptions~\ref{ass:standing}, \ref{ass:terminal}, \ref{ass:increm_stab}, and \ref{ass:exp_bounds}\ref{ass:exp_bound_ell_quad},\ref{ass:exp_bounds_alpha_delta} hold. 
For any initial condition $x_0\in\mathbb{X}_N$, weighting $\lambda>0$, and any disturbances $w(k)\in\mathbb{R}^n$, $k\in\mathbb{I}_{\geq 0}$,  \change{and any time $T\in\mathbb{I}_{\geq 0}$}, the closed-loop system~\eqref{eq:closed_loop} satisfies
\begin{align}
\label{eq:performance_slack}
&\dfrac{1}{1+\epsilon_\lambda}\sum_{k=0}^{\change{T-1}}\ell(x(k),u(k))\leq V_{\nom}(x_0)+\lambda c_{4,\delta}\sum_{k=0}^{\change{T-1}} \|w(k)\|^2,\nonumber\\
&\epsilon_\lambda:= \dfrac{1}{\lambda}\frac{\sigma_{\max}(Q)}{c_{3,\delta}}\in(0,\infty).
\end{align}
\end{theorem}
\begin{pf}
With $x_0\in\mathbb{X}_N$, a feasible candidate solution for Problem~\eqref{eq:MPC_equ} is given by $\bar{x}=x_0$, which implies $V_{\slack}(x_0)\leq V_{\nom}(x_0)$. 
Abbreviate $\bar{x}(k):=\bar{x}_{\slack}^*(x(k))$ and $\Delta x(k):=x(k)-\bar{x}(k)$. 
Inequality~\eqref{eq:global_decrease_1} and $\alpha_{3,\delta},\alpha_{4,\delta}$ quadratic imply 
\begin{align*}
&V_{\slack}(x(k+1))-V_{\slack}(x(k))\\
\leq& -\ell(\bar{x}(k),u(k))-\lambda c_{3,\delta}\|\Delta x(k)\|^2+\lambda c_{4,\delta}\|w(k)\|^2.
\end{align*}
Using a telescopic sum and $V_{\slack}\geq 0$, we arrive at 
\begin{align}
\label{eq:performance_telescopic}
&\sum_{k=0}^{\change{T-1}} \ell(\bar{x}(k),u(k))+\lambda c_{3,\delta}\|\Delta x(k)\|^2\\
\leq &V_{\slack}(x_0)+\sum_{k=0}^{\change{T-1}}\lambda c_{4,\delta}\|w(k)\|^2\nonumber\\
\leq & V_{\nom}(x_0)+\sum_{k=0}^{\change{T-1}}\lambda c_{4,\delta}\|w(k)\|^2.\nonumber
\end{align}
Given that $\ell$ is quadratic, we can use Cauchy-Schwarz and Young’s inequality with some $\epsilon>0$ to ensure:
\begin{align}
\label{eq:cauchy}
\frac{1}{1+\epsilon}\ell(x(k),u(k))\leq \ell(\bar{x}(k),u(k))+\frac{1}{\epsilon}\|\Delta x(k)\|_Q^2.
\end{align}
Choosing  $\epsilon=\epsilon_{\lambda}:=\frac{1}{\lambda}\frac{\sigma_{\max}(Q)}{c_{3,\delta}}>0$ yields
\begin{align*}
&\dfrac{1}{1+\epsilon_\lambda}\ell(x(k),u(k))\leq \ell(\bar{x}(k),u(k))+\lambda c_{3,\delta}\|\Delta x(k)\|^2.
\end{align*}  
Using this inequality in~\eqref{eq:performance_telescopic} implies~\eqref{eq:performance_slack}. $\hfill\square$
\end{pf}
In the absence of disturbances ($w\equiv 0$), the performance bound~\eqref{eq:performance_slack} is almost equivalent to the nominal performance bound~\eqref{eq:nom_MPC_performance}. The difference lies in the factor $\frac{1}{1+\epsilon_\lambda}$, which vanishes as we increase the weight $\lambda>0$. 
As discussed in Remark~\ref{rk:local_opt}, the nominal MPC performance~\eqref{eq:nom_MPC_performance} is near-optimal. 
Hence,  by increasing the weight $\lambda>0$ we achieve near-optimal performance in the nominal setting ($x_0\in\mathbb{X}_N$, $w\equiv 0$). 
\change{However, a large weight $\lambda$ may also increase the effect of disturbances, see Remark~\ref{rk:weight_lambda} for guidelines.}

%\ifbool{arxiv}{\input{Tube_concise}}{}
%!TEX root = ./Soft.tex
%%%%%%%%%%%%%%%%%%%%%%%%%%%%%%%%%%%%%%%%%%%%%%%%%%%%%%%%%%%%%%%%%%%%%%%%%%%%%%%
\section{Constrained linear quadratic regulator}
\label{sec:linear}
In the following, we study the important special case of the constrained LQR and show how the design simplifies. Furthermore, we discuss the relation to soft-constrained MPC formulations.
\change{For the following discussion, suppose we have a linear system $f(x,u)=Ax+Bu$, the matrix $A$ is Schur stable, we have a quadratic stage cost $\ell(x,u)=\|x\|_Q^2+\|u\|_R^2$ with $Q,R$ positive definite, and the constraints $\mathbb{U}$, $\mathbb{X}$ are polytopes that satsify $0\in\text{int}\left(\mathbb{X}\times\mathbb{U}\right)$.}  
%Consider a linear system $f(x,u)=Ax+Bu$, $A$ Schur stable, a quadratic stage cost $\ell(x,u)=\|x\|_Q^2+\|u\|_R^2$ with $Q,R$ positive definite, and polytopic constraint sets $\mathbb{U}$, $\mathbb{X}$ with $0\in\text{int}\left(\mathbb{X}\times\mathbb{U}\right)$. 

\textit{Design procedure:} 
We compute a quadratic terminal penalty $V_{\mathrm{f}}$ based on the LQR and choose $\mathbb{X}_{\mathrm{f}}$ as the corresponding maximal positive invariant set, where the constraints $\mathbb{U},\mathbb{X}$ are satisfied. 
Thus, the nominal MPC is locally equivalent to the (infinite-horizon optimal) constrained LQR~\cite{sznaier1987suboptimal,chmielewski1996constrained,scokaert1998constrained}. 
We compute a quadratic function $V_\delta(x,z)=\|x-z\|_{P}^2$ using the Lyapunov equation $A^\top PA+Q-P=0$. 
This design directly satisfies all the  assumptions.  
Compared to nominal MPC, this adds minimal design effort, only one Lyapunov equation to compute $V_\delta$.
The resulting optimization problem~\eqref{eq:MPC_proposed} is a standard QP. 
The closed-loop system is ISS and provides uniform bounds on the constraint violation for arbitrary disturbances $w$ and initial conditions $x_0$ (cf. Theorems~\ref{thm:global} and \ref{thm:constraints}). 
Furthermore, with a large weight $\lambda>0$, we achieve close-to-optimal performance on the feasible set $\mathbb{X}_N$ (cf. Thm.~\ref{thm:performance}).

\subsection{Comparison to soft-constrained MPC}
\label{sec:linear_soft}
In the following, we \change{compare against} state-of-the-art soft-constrained MPC formulations, which are also considered in the numerical comparison later. 
The state constraints $\mathbb{X}=\{x|~H x\leq h\}$ can be relaxed by introducing a slack variable $\xi=\max\{H x-h,0_p\}\in\mathbb{R}^p_{\geq 0}$ and adding a quadratic penalty $\|\xi\|_{Q_\xi}^2$ with $Q_\xi$ positive definite~\cite{scokaert1999feasibility,de1994constraint}. 

\SoftP:
Using such soft state constraints with a terminal set constraint based on the LQR (cf.~\cite{sznaier1987suboptimal,chmielewski1996constrained,scokaert1998constrained}) increases the set of feasible \change{initial states}, while approximately retaining the local optimality guarantees. 
We refer to this approach as \SoftP, as it approximately recovers the nominal \textbf{P}erformance. 

\SoftT:
A formulation to partially relax this terminal set constraint while keeping the local LQR performance has been proposed in~\cite{zeilinger2014soft}, which was further refined to a polytopic terminal set and nonlinear systems in~\cite[Sec.~3]{wabersich2021soft} and~\cite[Sec.~3]{chatzikiriakos2024learning}. 
This approach uses a terminal penalty based on the LQR and a hard terminal set constraint that only accounts for input constraints, while the future state constraint violations are penalized similar to the peak-based cost in~\cite{zheng1995stability}. 
We refer to this approach as \SoftT due to the softened \textbf{T}erminal constraint. 
An alternative design to relax the terminal set constraint is proposed in~\cite{rakovic2021model} by imposing linear dynamics on the slack variables. 

\SoftG: 
By combining the globally stabilizing MPC~\cite{zheng1995stability,rawlings1993stability} with the terminal penalty in~\cite[Thm.~5]{feller2016relaxed}, one can construct a globally feasible soft-constrained MPC formulation:  
Considering $\|x\|_Q^2+\|\xi\|_{Q_\xi}^2\leq \|x\|^2_{Q+H^\top Q_\xi H}$, a globally valid terminal penalty is given by $V_{\mathrm{f}}(x)=\|x\|_{P_{\mathrm{f}}}^2$ with $A^\top P_{\mathrm{f}} A-P_{\mathrm{f}}+Q+H^\top Q_\xi H=0$, $\mathbb{X}_{\mathrm{f}}=\mathbb{R}^n$, $u=k_{\mathrm{f}}(x)=0$. 
We refer to this approach as \SoftG due to its \textbf{G}lobal properties. 

\textit{Qualitative comparison:}
The offline design requirements and online computational complexity of these different formulations are similar to nominal MPC. 
%Soft-P
The \SoftP and \SoftT designs yield closed-loop operation equivalent to a nominal MPC under nominal operating conditions, ensuring local optimality (Rk.~\ref{rk:local_opt}). 
Despite the relaxed state constraints, the presence of unexpected disturbances $w$ in combination with the required terminal state constraint $\mathbb{X}_{\mathrm{f}}$ can yield infeasibility. 
%Soft-G
On the other hand, the \SoftG approach ensures ISS also for \change{unbounded} disturbances $w$, although performance deteriorates significantly \change{compared} to nominal MPC. 
%proposed
The proposed approach simultaneously inherits the positive properties of both approaches: 
Under nominal operating conditions, we recover nominal MPC performance (cf. Thm.~\ref{thm:performance}), which is locally optimal. 
Reliable operation and robust stability (Thm.~\ref{thm:global},\ref{thm:constraints}) are ensured without making any assumption on the disturbances or the initial condition. 

An additional feature of the theoretical analysis (Thm.~\ref{thm:global}, \ref{thm:constraints}) is the fact that stability properties and bounds on the constraint violation hold uniformly, even for arbitrarily large weights $\lambda>0$. 
This is in general not the case for existing soft-constrained MPC formulations.
Specifically, for the discussed soft-constrained MPC formulations, the Lyapunov function results in a decay of the form $-\|x\|_Q^2$. 
The increase due to disturbances can be bounded proportional to $\|w\|_{Q+Q_\xi}$. As such, the stability properties of existing soft-constrained MPC formulations can deteriorate if a large penalty $Q_\xi$ is chosen, which is not the case in the proposed formulation.

%!TEX root = ./Soft.tex
%%%%%%%%%%%%%%%%%%%%%%%%%%%%%%%%%%%%%%%%%%%%%%%%%%%%%%%%%%%%%%%%%%%%%%%%%%%%%%%
\section{Numerical examples} 
\label{sec:num}
First, we consider a simple linear system from~\cite{wabersich2021soft} to provide a quantitative comparison to existing soft-constrained MPC approaches. 
Then, we showcase robustness to large disturbances using the nonlinear system from~\cite{limon2018nonlinear}, where we also demonstrate feasibility issues of soft-constrained MPC schemes. 
Implementation details are provided in\ifbool{arxiv}{ Appendix~\ref{app:num}}{\cite[App.~C]{JK_Soft_extended}}. 
The code is available online:\\{\url{https://github.com/KohlerJohannes/SoftMPC}}. 

\subsection{Comparison to linear soft-constrained MPC}
\label{sec:num_linear}
\begin{figure*}[hbtp]
\begin{center}
\includegraphics[width=0.3\textwidth]{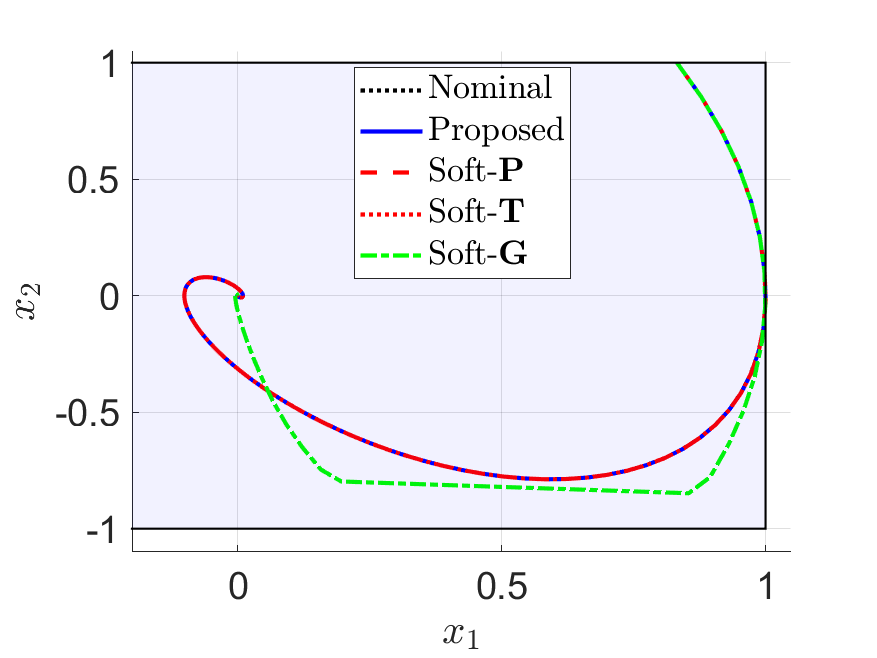}\quad 
\includegraphics[width=0.3\textwidth]{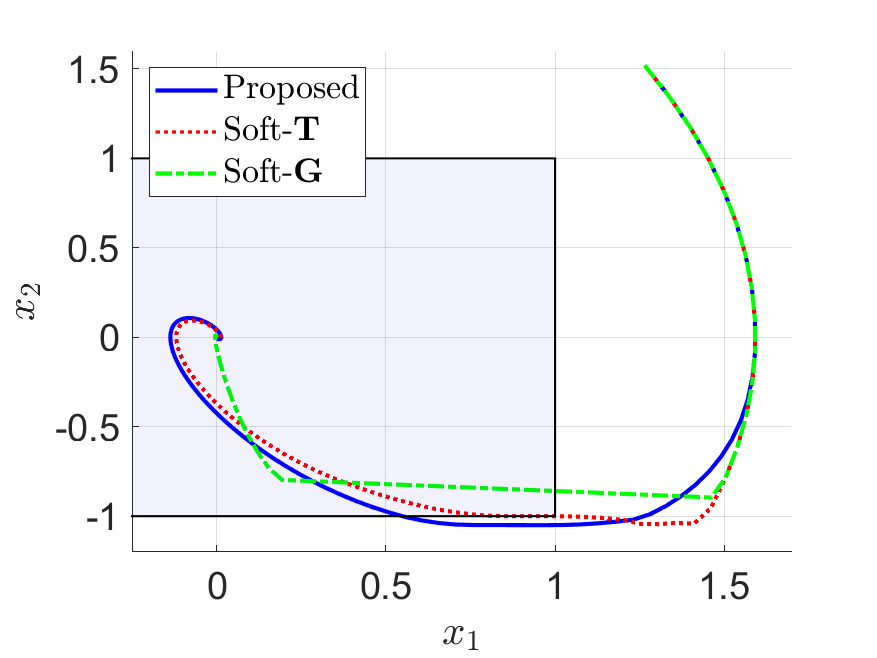}\quad
\includegraphics[width=0.3\textwidth]{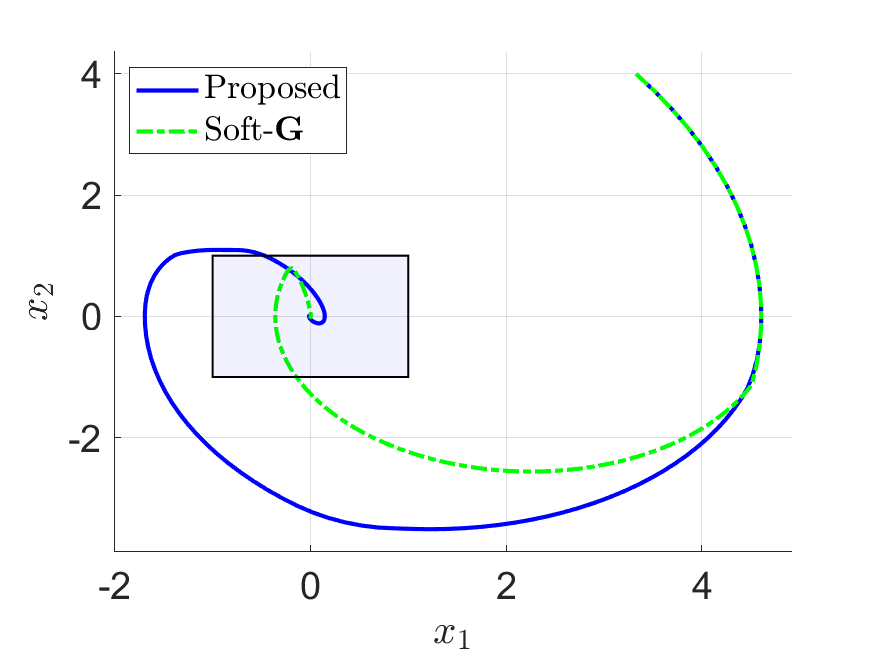}
\end{center}
\caption{\change{Linear system}: Closed-loop trajectories initialized at $x_0=c\cdot [0.832,1]^\top$ with $c=1$ (left), $c=1.520$ (middle), $c=4$ (right).
The state constraint $\mathbb{X}$ is shaded. Only feasible MPC solutions are plotted.}
\label{fig:soft_linear}
\end{figure*}
We compare the proposed MPC formulation to the soft-constrained MPC formulations for linear systems introduced in Section~\ref{sec:linear}:
\change{\begin{itemize}
\item locally optimal (\SoftP, \cite{sznaier1987suboptimal,chmielewski1996constrained,scokaert1998constrained});
\item locally optimal with relaxed terminal constraint (\SoftT, \cite{zeilinger2014soft,wabersich2021soft});  
\item  globally feasible (\SoftG, \cite{zheng1995stability,rawlings1993stability,feller2016relaxed}).
\end{itemize}}
We consider a mass-spring-damper system taken from~\cite{wabersich2021soft} with bounded control force, soft state constraints on position and velocity, and no disturbances, see \ifbool{arxiv}{Appendix~\ref{app:num}}{\cite[App.~C]{JK_Soft_extended}} for details.

We compare the performance and region of attraction using three exemplary initial conditions
$x_0=c\cdot [0.832,1]^\top$, $c\in\{1,1.520,4\}$: on the boundary of the nominal feasible set $\mathbb{X}_N$, on the boundary of the feasible set of the \SoftT approach, and a large initial condition to approximately consider the global behaviour. 
The resulting closed-loop trajectories can be seen in Figure~\ref{fig:soft_linear}. 
\change{A quantitative performance comparison is provided in Table~\ref{tab:linear_perf} and computational demand is reported in Table~\ref{tab:linear_comp}.}
%. The following table summarizes the closed-loop cost normalized w.r.t. the proposed approach, where $\times$ indicates infeasibility.
%\JK{\\Tables larger with caption, more details}
\begin{table}[h]
\caption{\change{Closed-loop cost for different soft-constrained MPC approaches starting at three different initial conditions, indicating by the scaling $c$. The cost is normalized with repsect to the cost of the proposed approach. 
$\times$ indicates infeasibility of the approach.}}
\label{tab:linear_perf}
\begin{tabular}{c|c|c|c|c}
&Proposed&\SoftP&\SoftT&\SoftG\\\hline
$c=1.00$&$100\%$&$100.0\%$&$100.0\%$&$116.2\%$\\
$c=1.52$&$100\%$&$\times$&$102.4\%$&$109.0\%$\\
$c=4.00$&$100\%$&$\times$&$\times$&$96.7\%$.
\end{tabular}
\end{table}
\begin{table}[ht]
\caption{\change{Average computation times  in [ms] based on a $101\times 101$ grid of $\mathbb{X}$, considering $9863$ feasible points $x\in\mathbb{X}_N$.}}
\label{tab:linear_comp}
\begin{tabular}{c|c|c|c|c|c}
Nominal&Proposed&\SoftP&\SoftT&\SoftG\\\hline
$3.73$&$3.98$&$7.42$&$10.79$&$6.91$
\end{tabular}
\end{table}\\
% %interpret
In case $c=1$, we have $x_0\in\mathbb{X}_N$. 
The proposed approach and the soft-constrained MPC formulations using an LQR based terminal penalty (\SoftP, \SoftT) are approximately equivalent to the nominal MPC on the feasible set. 
On the other hand, the globally feasible soft-constrained MPC (\SoftG) requires a very large terminal penalty (factor $10^6$ larger) resulting in a significant performance deterioration. 
We note that the \SoftP approach looses feasibility for $c>1.02$. 
Hence, we directly consider $c=1.52$, on the boundary of the feasible set of \SoftT. 
The soft-constrained MPC formulations \SoftG and \SoftT directly minimize the distance to the state constraints $\mathbb{X}$, which reduces the constraint violation by $5$--$8\%$. 
On the other hand, the proposed formulation achieves a better performance. 
Considering the large initial condition with $c=4$, only the proposed approach and the \SoftG implementation are feasible. 
In this case, the \SoftG approach achieves slightly better performance by using a bang-bang control.
The computation complexity of the proposed method is almost equivalent to a nominal MPC, while the slack variables significantly increase the complexity of the soft-constrained MPC alternatives.

\textit{Summary:}
The overall design and online optimization of the proposed approach is simpler or comparable to the considered soft-constrained MPC alternatives. 
On the nominal feasible set $\mathbb{X}_N$, the proposed approach, \SoftP and \SoftT approximately recover the nominal MPC, while the \SoftG approach resulted in a significant performance deterioration. 
For very large initial states (or equivalently outlier disturbances), only the proposed approach and \SoftG are feasible. 
The proposed approach is the only method, which achieves both: local optimality and an arbitrary large (global) region of attraction.

%----------------------------------------------------------------------------------------------------------------------------------------------------------------------------------------
%---------------------------------------------------------------------------------------------------------------------------------------
\subsection{Nonlinear system subject to large disturbances}
\label{sec:num_nonlinear}
The following example demonstrates the practical benefits for nonlinear systems subject to large disturbances. 
We consider the nonlinear four-tank system from the experiments in~\cite[Sec.~VI]{limon2018nonlinear}. 
The input $u\in\mathbb{R}^2_{\geq 0}$ corresponds to the water flow, which is subject to the hard constraint. 
The states $x\in\mathbb{R}^4_{\geq 0}$ correspond to the water level in different tanks and are subject to soft state constraints with a compact set $\mathbb{X}$. 
The problem addresses a coupled multi-input-multi-output nonlinear open-loop stable system with non-minimum phase behaviour. 
We consider a quadratic function $V_\delta(x,z)=\|x-z\|_{P_\delta}^2$, which satisfies Assumption~\ref{ass:increm_stab} in the considered region of operation, see \ifbool{arxiv}{Appendix~\ref{app:num}}{\cite[App.~C]{JK_Soft_extended}}.  
We consider four MPC implementations: 
\begin{itemize}
\item Nominal: a nominal MPC implementation;
\item Soft: a nominal MPC with soft state constraints;
\item Proposed: the proposed formulation (Section~\ref{sec:proposed});
%\item Proposed-robust: proposed formulation with additional robustification (Problem~\eqref{eq:MPC_tube_relax}).
\end{itemize} 
The nominal MPC scheme is designed as in~\cite[Sec.~VI]{limon2018nonlinear} using a tracking formulation to achieve a desired reference $[x_1,x_3]^\top\stackrel{!}{=}y_{\mathrm{d}}=[1.3,1.3]^\top$, which lies on the boundary of the state constraints $\mathbb{X}$. 
We consider uniformly distributed disturbances $\|w(k)\|_\infty\leq \overline{w}(k)$. 
The disturbance bound $\bar{w}(k)$ is zero for $k\in\mathbb{I}_{[0,k_1]}$, then linearly increases to $5\cdot 10^{-2}$ in $k\in\mathbb{I}_{[k_1,k_2]}$, linearly decreases to zero in $k\in\mathbb{I}_{[k_2,k_3]}$ and stays zero for $k\in\mathbb{I}_{\geq k_3}$ with $k_{i}\in\{50,350,650\}$.  
%The robustified design (Sec.~\ref{sec:theory_tube}) uses $\mathbb{W}=\{w|~\|w\|\leq 7.5\cdot 10^{-4}\}$, which is significantly smaller than the largest disturbances during the simulation. 

\change{The resulting closed-loop trajectories can be seen in Figure~\ref{fig:tank}.}  
In the interval $k\in\mathbb{I}_{[0,k_1]}$ (no disturbances), the nominal MPC successfully converges to the desired setpoint $y_{\mathrm{d}}$. 
\change{A quantitative comparison of computational demand and performance in this interval can be found in Table~\ref{tab:nonlinear}.} 
The soft-constrained MPC and the proposed MPC are virtually indistinguishable from the nominal MPC in this nominal setting ($x\in\mathbb{X}_N$). 
In the interval $k\in\mathbb{I}_{[k_1,k_2]}$, the magnitude of the disturbances increases and the nominal MPC quickly becomes infeasible. 
Both the proposed MPC framework and the soft-constrained MPC result in trajectories with fluctuations and constraint violations. 
Despite the relaxation of the state constraints, the soft-constrained MPC has a bounded feasible set due to the required terminal set constraint. 
Thus, the persistent disturbances $w$ result in a loss of feasibility. 
On the other hand, the proposed MPC framework retains feasibility and stability irrespectively of the magnitude of the disturbances. 
%
%--- robust design ----
%Considering the robustified version of the proposed approach (Sec.~\ref{sec:theory_tube}): 
%\change{The fluctuations and constraint violations due to the disturbances are also reduced; however, this comes at the price of a worse nominal performance due to a conservative distance from the constraints.} 
%In contrast to a conventional robust MPC design, the disturbance bound $\overline{w}$ considered in the proposed design (Sec.~\ref{sec:theory_tube}) is a tuning variable that reduces constraint violations, but in general does \textit{not} account for worst-case disturbances. 
%-------
%
%The following tables details the the mean computation times  [ms] and the (normalized) tracking error $\|[x_1,x_3]^\top-y_{\mathrm{d}}\|^2$ [$\%$] in the interval $k\in\mathbb{I}_{[0,k_1]}$, where all approaches are feasible. 
\begin{table}[ht]
\caption{\change{Quantitative comparison over the interval $k\in\mathbb{I}_{[0,k_1]}$, where all approaches are feasible. Average computation times in [ms]. The tracking error $\|[x_1,x_3]^\top-y_{\mathrm{d}}\|^2$ is summed over the interval $k\in\mathbb{I}_{[0,k_1]}$ and normalized with respect to the cost  of the nominal MPC.}}
\label{tab:nonlinear}
 \begin{tabular}{c|r|r|r|r}
&Nominal&Soft&Proposed\\\hline%&Proposed-robust
Computation time&$13.0$&$16.3$&$15.5$\\%&$17.4$
Tracking error&$100\%$&$99.78\%$&$99.86\%$%&$109.75$
\end{tabular}
\end{table}\\
We have demonstrated the practical benefits of the proposed MPC framework for nonlinear systems subject to large disturbances.
During nominal operation, the proposed softened initial state constraint and standard soft state constraints approximately result in the same closed-loop trajectories as a nominal MPC scheme. 
The soft-constrained MPC scheme lost feasibility in the presence of large disturbances. 
On the other hand, the proposed relaxed initial state constraint ensures feasibility and (input-to-state) stability, even under large disturbance. 
 This makes the proposed approach particularly attractive in practice, as no infeasibility handling is required. 
%The proposed robustification (Sec.~\ref{sec:theory_tube}) allows to flexibly trade-off conservativeness and constraint violation without affecting stability or feasibility of the controller.

\begin{figure}[t] 
\begin{center}
\includegraphics[width=0.38\textwidth]{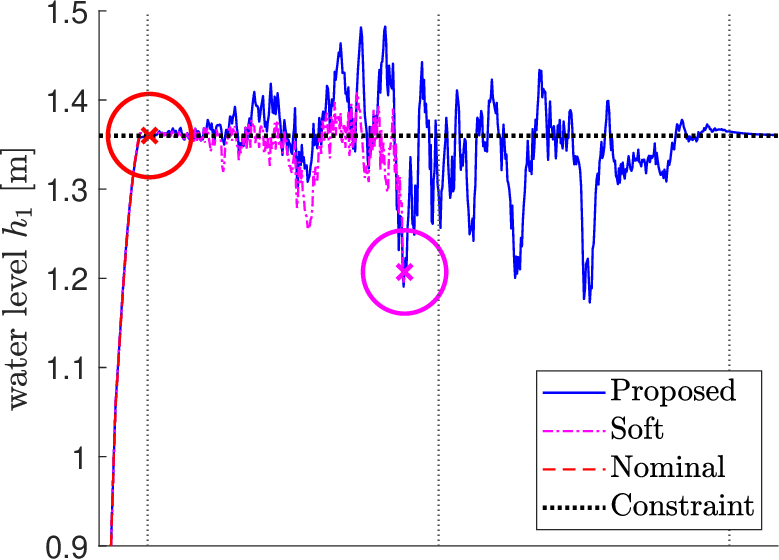}\\
\vspace{2mm}
\includegraphics[width=0.38\textwidth]{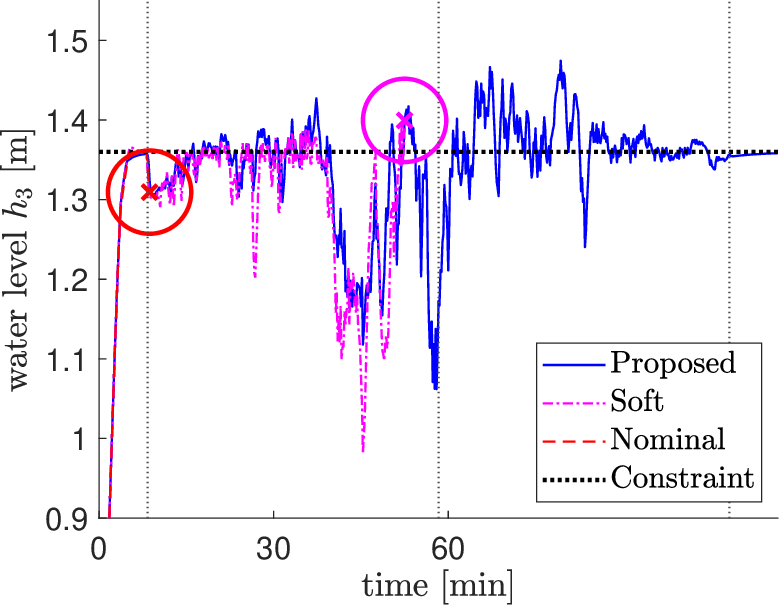}
\end{center}
\caption{\change{Nonlinear system subject to disturbances: }Closed-loop water level $h_1$ and $h_3$ for different MPC formulations. 
State constraints and time points $k_i$, $i\in\mathbb{I}_{[1,3]}$, are shown as black dotted lines. 
The MPC schemes are stopped in case of infeasibility, which is highlighted with a circle with a cross $\times$.}
\label{fig:tank}
\end{figure}

%!TEX root = ./Soft.tex
%%%%%%%%%%%%%%%%%%%%%%%%%%%%%%%%%%%%%%%%%%%%%%%%%%%%%%%%%%%%%%%%%%%%%%%%%%%%%%%
\section{Conclusion}  
\label{sec:sum}
We showed that relaxing the initial state constraint in MPC with a suitable penalty provides an inherently robust MPC formulation. 
Under nominal operating conditions, the proposed approach shares the (close-to-optimal) performance and constraint satisfaction of a nominal MPC design. 
\change{The softened initial state constraint avoids loss of feasibility, ensures ISS, and provides a suitable bound on the cumulative constraint violation, even under unbounded disturbances.} 
\change{The idea is presented for open-loop stable nonlinear systems using a Lyapunov function as the penalty, but the idea naturally generalizes to Lyapunov stable systems, stabilizable systems, and an implicit trajectory-based characterization (cf.~\ifbool{arxiv}{App.~\ref{app:stability}}{\cite[App.~A]{JK_Soft_extended}}).} 
Benefits compared to soft-constrained MPC formulations have been demonstrated in numerical examples. 
%Future work:
\ifbool{arxiv}{Open issues include extending the inherent robustness analysis to more general (economic) costs~\cite{mcallister2023suboptimal}, noisy measurements~\cite{roset2008robustness}, and parametric model mismatch~\cite{kuntz2024beyond}.}{} 
%%%%%%%%%%%%%%%%%%%%%%%%%%%%%%%%%%%%%%%%%%%%%%%%%%%%%%%%%%%%%%%%%%%%%%%%%%%%%%%%
%\bibliographystyle{plain}        % Include this if you use bibtex 
\bibliographystyle{unsrt}
\bibliography{Literature} 

@MISC{JK_Soft_extended,
  author      = {K\"{o}hler, Johannes and Zeilinger, Melanie N},
  title       = {A model predictive control framework with robust stability guarantees under unbounded disturbances},
  year        = {2025}, 
  note        = {available online on arXiv:2207.10216},
}

@article{vada2001linear,
  title={Linear {MPC} with optimal prioritized infeasibility handling: application, computational issues and stability},
  author={Vada, Jostein and Slupphaug, Olav and Johansen, Tor A and Foss, Bjarne A},
  journal={Automatica},
  volume={37},
  number={11},
  pages={1835--1843},
  year={2001},
  publisher={Elsevier}
}

@article{kohler2022recursively,
  title={Recursively feasible stochastic predictive control using an interpolating initial state constraint},
  author={K{\"o}hler, Johannes and Zeilinger, Melanie N},
  journal={IEEE Control Systems Letters},
  volume={6},
  pages={2743--2748},
  year={2022},
  publisher={IEEE}
}

@article{schluter2022stochastic,
  title={Stochastic model predictive control using initial state optimization},
  author={Schl{\"u}ter, Henning and Allg{\"o}wer, Frank},
  journal={IFAC-PapersOnLine},
  volume={55},
  number={30},
  pages={454--459},
  year={2022},
  publisher={Elsevier}
}

@article{mcallister2023suboptimal,
  title={A suboptimal economic model predictive control algorithm for large and infrequent disturbances},
  author={McAllister, Robert D and Rawlings, James B},
  journal={IEEE Transactions on Automatic Control},
  volume={69},
  number={2},
  pages={1242--1248},
  year={2023},
  publisher={IEEE}
}

@article{kuntz2024beyond,
  title={Beyond inherent robustness: strong stability of MPC despite plant-model mismatch},
  author={Kuntz, Steven J and Rawlings, James B},
  journal={arXiv preprint arXiv:2411.15452},
  year={2024}
}

@inproceedings{oancea2023relaxed,
  title={Relaxed Recentered Log-Barrier Function Based Nonlinear Model Predictive Control},
  author={Oancea, Tudor Andrei and Jiang, Yuning and Jones, Colin N},
  booktitle={Proc. European Control Conference (ECC)},
  pages={1--6},
  year={2023},
}

@article{wachter2006implementation,
  title={On the implementation of an interior-point filter line-search algorithm for large-scale nonlinear programming},
  author={W{\"a}chter, Andreas and Biegler, Lorenz T},
  journal={Mathematical programming},
  volume={106},
  number={1},
  pages={25--57},
  year={2006},
  publisher={Springer}
}

@inproceedings{kerrigan2000soft,
  title={Soft constraints and exact penalty functions in model predictive control},
  author={Kerrigan, Eric C and Maciejowski, Jan M},
  booktitle = {Proc. UKACC Int.\ Conf.\ on Control (CONTROL)},
  year={2000}
}

@article{scokaert1999feasibility,
  title={Feasibility issues in linear model predictive control},
  author={Scokaert, Pierre OM and Rawlings, James B},
  journal={AIChE Journal},
  volume={45},
  number={8},
  pages={1649--1659},
  year={1999},
  publisher={Wiley Online Library}
}

@article{wabersich2021soft,
  author={Wabersich, Kim P. and Krishnadas, Raamadaas and Zeilinger, Melanie N.},
  journal={IEEE Control Systems Letters}, 
  title={A soft constrained {MPC} formulation enabling learning from trajectories with constraint violations}, 
  year={2022},
  volume={6},
  number={},
  pages={980-985},
}

@article{rawlings1993stability,
  title={The stability of constrained receding horizon control},
  author={Rawlings, James B and Muske, Kenneth R},
  journal = {IEEE Trans. Autom. Control},
  volume={38},
  number={10},
  pages={1512--1516},
  year={1993},
  publisher={IEEE}
}

@article{zheng1995stability,
  title={Stability of model predictive control with mixed constraints},
  author={Zheng, Alex and Morari, Manfred},
  journal = {IEEE Trans. Autom. Control},
  volume={40},
  number={10},
  pages={1818--1823},
  year={1995},
  publisher={IEEE}
}

@article{rakovic2021model,
  title={Model Predictive Control for Linear Systems under Relaxed Constraints},
  author={Rakovic, Sasa V and Zhang, Sixing and Sun, Haidi and Xia, Yuanqing},
  journal = {IEEE Trans. Autom. Control},
  year={2023},
  volume={68},
  number={1},
  pages={369-376},
  publisher={IEEE}
}

@inproceedings{chatzikiriakos2024learning,
  title={Learning soft constrained {MPC} value functions: Efficient {MPC} design and implementation providing stability and safety guarantees},
  author={Chatzikiriakos, Nicolas and Wabersich, Kim Peter and Berkel, Felix and Pauli, Patricia and Iannelli, Andrea},
  booktitle={Proc. 6th Annual Learning for Dynamics \& Control Conference},
  pages={387--398},
  year={2024},
  organization={PMLR}
}

@article{feller2016relaxed,
  title={Relaxed logarithmic barrier function based model predictive control of linear systems},
  author={Feller, Christian and Ebenbauer, Christian},
  journal  = {IEEE Trans. Autom. Control},
  volume={62},
  number={3},
  pages={1223--1238},
  year={2016},
  publisher={IEEE}
}

@article{de1994constraint,
  title={Constraint handing and stability properties of model-predictive control},
  author={de Oliveira, Nuno MC and Biegler, Lorenz T},
  journal  = {AIChE J.},
  volume={40},
  number={7},
  pages={1138--1155},
  year={1994},
}

@book{rawlings2017model,
  title={Model Predictive Control: Theory, Computation, and Design},
  author={Rawlings, James Blake and Mayne, David Q and Diehl, Moritz},
  year={2017},
  publisher={Nob Hill Publishing}
}

@article{jiang2001input,
  title={Input-to-state stability for discrete-time nonlinear systems},
  author={Jiang, Zhong-Ping and Wang, Yuan},
  journal={Automatica},
  volume={37},
  number={6},
  pages={857--869},
  year={2001},
  publisher={Elsevier}
}

@article{magni2001stabilizing,
  title={A stabilizing model-based predictive control algorithm for nonlinear systems},
  author={Magni, Lalo and De Nicolao, Giuseppe and Magnani, Lorenza and Scattolini, Riccardo},
  journal={Automatica},
  volume={37},
  number={9},
  pages={1351--1362},
  year={2001},
  publisher={Elsevier}
}

@article{roset2008robustness,
  title={On robustness of constrained discrete-time systems to state measurement errors},
  author={Roset, BJP and Heemels, WPMH and Lazar, Mircea and Nijmeijer, Henk},
  journal={Automatica},
  volume={44},
  number={4},
  pages={1161--1165},
  year={2008},
  publisher={Elsevier}
}

@article{scokaert1998constrained,
  title={Constrained linear quadratic regulation},
  author={Scokaert, Pierre OM and Rawlings, James B},
  journal = {IEEE Trans. Autom. Control},
  volume={43},
  number={8},
  pages={1163--1169},
  year={1998},
  publisher={IEEE}
}

@article{chmielewski1996constrained,
  title={On constrained infinite-time linear quadratic optimal control},
  author={Chmielewski, Donald and Manousiouthakis, V},
  journal={Systems \& Control Letters},
  volume={29},
  number={3},
  pages={121--129},
  year={1996},
  publisher={Elsevier}
}

@inproceedings{sznaier1987suboptimal,
  title={Suboptimal control of linear systems with state and control inequality constraints},
  author={Sznaier, Mario and Damborg, Mark J},
  booktitle = {Proc. IEEE Conf.\ Decision and Control (CDC)},
  pages={761--762},
  year={1987},
}

@Article{chen1998quasi,
  author   = {Chen, H and Allg{\"o}wer, F},
  title    = {A Quasi-Infinite Horizon Nonlinear Model Predictive Control Scheme with Guaranteed Stability},
  journal  = {Automatica},
  year     = {1998},
  volume   = {34},
  pages    = {1205--1217}, 
}

@article{casavola1999global,
  title={Global predictive regulation of null-controllable input-saturated linear systems},
  author={Casavola, Alessandro and Giannelli, Monica and Mosca, Edoardo},
  journal = {IEEE Trans. Autom. Control},
  volume={44},
  number={11},
  pages={2226--2230},
  year={1999},
  publisher={IEEE}
}

@article{rosenberg1984exact,
  title={Exact penalty functions and stability in locally Lipschitz programming},
  author={Rosenberg, Eric},
  journal={Mathematical programming},
  volume={30},
  number={3},
  pages={340--356},
  year={1984},
  publisher={Springer}
}

@Book{grune2017nonlinear,
  title     = {Nonlinear Model Predictive Control},
  publisher = {Springer},
  year      = {2017},
  author    = {Gr{\"u}ne, Lars and Pannek, J{\"u}rgen}, 
}

@InProceedings{bayer2013discrete,
  author    = {Bayer, Florian and B\"urger, Mathias and Allg\"ower, Frank},
  title     = {Discrete-time incremental {ISS}: A framework for robust {NMPC}},
  booktitle = {Proc.\ European Control Conf.\ (ECC)},
  year      = {2013},
  pages     = {2068--2073}, 
}

@Article{angeli2002lyapunov,
  author   = {Angeli, David},
  title    = {A {L}yapunov approach to incremental stability properties},
  journal = {IEEE Trans. Autom. Control},
  year     = {2002},
  volume   = {47},
  pages    = {410--421},
  abstract = {Incremental stability},
  keywords = {incremental; delta; continous},
}

@article{manzano2020robust,
  title={Robust learning-based {MPC} for nonlinear constrained systems},
  author={Manzano, Jos{\'e} Mar{\'\i}a and Limon, Daniel and de la Pe{\~n}a, David Mu{\~n}oz and Calliess, Jan-Peter},
  journal={Automatica},
  volume={117},
  pages={108948},
  year={2020},
  publisher={Elsevier}
}

@article{tran2019convergence,
  title={Convergence properties for discrete-time nonlinear systems},
  author={Tran, Duc N and R{\"u}ffer, Bj{\"o}rn S and Kellett, Christopher M},
  journal = {IEEE Trans. Autom. Control},
  volume={64},
  number={8},
  pages={3415--3422},
  year={2019}, 
}

@Article{yu2014inherent,
  author   = {Yu, Shuyou and Reble, Marcus and Chen, Hong and Allg{\"o}wer, Frank},
  title    = {Inherent robustness properties of quasi-infinite horizon nonlinear model predictive control},
  journal  = {Automatica},
  year     = {2014},
  volume   = {50},
  pages    = {2269--2280}, 
}

@Article{zeilinger2014soft,
  author   = {Zeilinger, Melanie N and Morari, Manfred and Jones, Colin N},
  title    = {Soft constrained model predictive control with robust stability guarantees},
  journal  = {IEEE Trans. Autom. Control},
  year     = {2014},
  volume   = {59},
  pages    = {1190--1202},
}

@phdthesis{kohler2021dynamic,
  title={Analysis and design of {MPC} frameworks for dynamic operation of nonlinear constrained systems},
  author={K\"{o}hler, Johannes},
  year={2021},
  school   = {Universit{\"a}t Stuttgart},
  note={doi:10.18419/opus-11742}
}

@Article{grune2008infinite,
  author   = {Gr\"une, Lars and Rantzer, Anders},
  title    = {On the infinite horizon performance of receding horizon controllers},
  journal  = {IEEE Trans. Autom. Control},
  year     = {2008},
  volume   = {53},
  pages    = {2100--2111},
  keywords = {MPC;perform},
}

@article{grimm2005model,
  title={Model predictive control: {F}or want of a local control {L}yapunov function, all is not lost},
  author={Grimm, Gene and Messina, Michael J and Tuna, Sezai Emre and Teel, Andrew R},
  journal = {IEEE Trans. Autom. Control},
  volume={50},
  number={5},
  pages={546--558},
  year={2005},
  publisher={IEEE}
}

@Article{manchester2017control,
  author  = {Manchester, Ian R and Slotine, Jean-Jacques E},
  title   = {Control contraction metrics: Convex and intrinsic criteria for nonlinear feedback design},
  journal = {IEEE Trans. Autom. Control},
  year    = {2017},
  volume  = {62},
  pages   = {3046--3053},
}

@Article{kellett2014compendium,
  author  = {Kellett, Christopher M},
  title   = {A compendium of comparison function results},
  journal = {Mathematics of Control, Signals, and Systems},
  year    = {2014},
  volume  = {26},
  pages   = {339--374},
}

@Article{limon2018nonlinear,
  author  = {Limon, Daniel and Ferramosca, Antonio and Alvarado, Ignacio and Alamo, Teodoro},
  title   = {Nonlinear {MPC} for Tracking Piece-Wise Constant Reference Signals},
  journal = {IEEE Trans. Autom. Control},
  year    = {2018},
  volume  = {63},
  pages   = {3735--3750},
}

@article{andersson2019casadi,
  title={{CasADi}: A software framework for nonlinear optimization and optimal control},
  author={Andersson, Joel AE and Gillis, Joris and Horn, Greg and Rawlings, James B and Diehl, Moritz},
  journal={Mathematical Programming Computation},
  volume={11},
  number={1},
  pages={1--36},
  year={2019},
  publisher={Springer}
}

@article{singh2023robust,
  title={Robust feedback motion planning via contraction theory},
  author={Singh, Sumeet and Landry, Benoit and Majumdar, Anirudha and Slotine, Jean-Jacques and Pavone, Marco},
  journal={Int.\ J.\ of Robotics Research},
  volume={42},
  number={9},
  pages={655--688},
  year={2023},
  publisher={SAGE Publications Sage UK: London, England}
}

@Book{kouvaritakis2016model,
  title     = {Model predictive control},
  publisher = {Springer},
  year      = {2016},
  author    = {Kouvaritakis, Basil and Cannon, Mark},
}

@article{mesbah2018stochastic,
  title={Stochastic model predictive control with active uncertainty learning: A survey on dual control},
  author={Mesbah, Ali},
  journal={Annual Reviews in Control},
  volume={45},
  pages={107--117},
  year={2018},
  publisher={Elsevier}
}

@Article{allan2017inherent,
  author    = {Allan, Douglas A and Bates, Cuyler N and Risbeck, Michael J and Rawlings, James B},
  title     = {On the inherent robustness of optimal and suboptimal nonlinear {MPC}},
  journal   = {Systems \& Control Letters},
  year      = {2017},
  volume    = {106},
  pages     = {68--78},
  publisher = {Elsevier},
}

@Article{grimm2004examples,
  author    = {Grimm, Gene and Messina, Michael J and Tuna, Sezai E and Teel, Andrew R},
  title     = {Examples when nonlinear model predictive control is nonrobust},
  journal   = {Automatica},
  year      = {2004},
  volume    = {40},
  number    = {10},
  pages     = {1729--1738},
  publisher = {Elsevier},
}
\ifbool{arxiv}{\appendix
\section{Relax Assumption~\ref{ass:increm_stab}}
\label{app:stability}
The following results relax Assumption~\ref{ass:increm_stab}, which considers a known Lyapunov function $V_\delta$ certifying asymptotic incremental stability.
Appendix~\ref{app:implicit} shows how explicit knowledge of $V_\delta$ can be relaxed. 
Appendix~\ref{app:Lyap_stable} and \ref{app:input} relax the asymptotic stability condition to Lyapunov stable systems and stabilizable systems, respectively. 
%!TEX root = ./Soft.tex
%%%%%%%%%%%%%%%%%%%%%%%%%%%%%%%%%%%%%%%%%%%%%%%%%%%%%%%%%%%%%%%%%%%%%%%%%%%%%%%
\subsection{Implicit initial state penalty}
\label{app:implicit}
In the following, we show how the proposed MPC formulation~\eqref{eq:MPC_proposed} can be implemented without requiring an explicit characterization of the incremental Lyapunov function $V_\delta$. 
This facilitates the overall design of the proposed MPC formulation, especially for nonlinear systems.

We use an implicit characterization of $V_\delta(x,\bar{x})$ by predicting the open-loop trajectories starting from both initial conditions over some finite horizon $M\in\mathbb{I}_{\geq 1}$ with an input sequence $\mathbf{u}\in\mathbb{U}^M$:
\begin{align}
\label{eq:increm_implicit}
V_{\delta,\impl}(x,\bar{x},\mathbf{u}):=&\sum_{k=0}^{M-1}\|x_{\mathbf{u}}(k,x)-x_{\mathbf{u}}(k,\bar{x})\|^2.
\end{align} 
This design is inspired by the converse Lyapunov theorem in~\cite{angeli2002lyapunov} (cf. also~\cite[App.~C]{kohler2021dynamic}), and the approximate terminal cost design in~\cite[Prop. 4.34]{kohler2021dynamic}.  
\begin{assumption}
\label{ass:exp_stab}
(Incremental exponential stability)
There exist constants $C\geq 1$, $\rho\in[0,1)$, such that for any $x,\bar{x}\in\mathbb{R}^n$ and any $\mathbf{u}\in\mathbb{U}^\infty$, it holds
\begin{align}
\label{eq:increm_exp_stable}
\|x_{\mathbf{u}}(k,x)-x_{\mathbf{u}}(k,\bar{x})\|^2\leq C\rho^k\|x-\bar{x}\|^2,\quad k\in\mathbb{I}_{\geq 0}.
\end{align}
\end{assumption}
Assumption~\ref{ass:exp_stab} reflects the fact that the system is \textit{exponentially} incrementally stable~\cite{angeli2002lyapunov,tran2019convergence}. For the following derivation, we assume that a rough upper bound for $C,\rho$ is known. 
Given an input sequence $\mathbf{u}\in\mathbb{U}^L$, we use $\mathbf{u}_{[k_1,k_2]}\in\mathbb{U}^{k_2-k_1+1}$ to denote the subsequence starting at $k_1$ and ending at $k_2$ with $k_1,k_2,L\in\mathbb{I}_{\geq 1}$, $k_2\leq L-1$. 
\begin{lemma}
\label{lemma:increm_implicit}
Let Assumption~\ref{ass:exp_stab} hold and suppose $M>\log(C)/\log(\rho)$. 
There exist constants $c_{\delta,1},c_{\delta,2},c_{\delta,3},c_{\delta,4}>0$, such that for any $x,\bar{x}\in\mathbb{R}^n$, and any $\mathbf{u}\in\mathbb{U}^{M+1}$, it holds:
\begin{subequations}
\label{eq:increm_implicit_ineq}
\begin{align}
\label{eq:increm_implicit_ineq_1}
&c_{\delta,1}\|x-\bar{x}\|^2\leq V_{\delta,\impl}(x,\bar{x},\mathbf{u}_{[0:M-1]})\leq c_{\delta,2}\|x-\bar{x}\|^2,\\
\label{eq:increm_implicit_ineq_2}
&V_{\delta,\impl}(f(x,\mathbf{u}_0)+w,f(\bar{x},\mathbf{u}_0),\mathbf{u}_{[1,M]})\nonumber\\
\leq& V_{\delta,\impl}(x,\bar{x},\mathbf{u}_{[0:M-1]})-c_{\delta,3}\|x-\bar{x}\|^2+c_{\delta,4}\|w\|^2.
\end{align}
\end{subequations}
\end{lemma}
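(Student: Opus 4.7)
The plan is to handle the two inequalities in~\eqref{eq:increm_implicit_ineq} separately, using the exponential-decay bound~\eqref{eq:increm_exp_stable} as the sole analytic ingredient. For~\eqref{eq:increm_implicit_ineq_1}, the lower bound is immediate: keeping only the $k=0$ term in~\eqref{eq:increm_implicit} gives $V_{\delta,\mathrm{impl}}(x,\overline{x},\mathbf{u}_{[0:M-1]})\geq\|x-\overline{x}\|^2$, so I would take $c_{\delta,1}=1$. The upper bound follows by bounding each summand via~\eqref{eq:increm_exp_stable} and summing the geometric series, giving $c_{\delta,2}=C/(1-\rho)$.

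For the decrease~\eqref{eq:increm_implicit_ineq_2}, I would introduce four trajectories: the ``current'' ones $y_k:=x_{\mathbf{u}}(k,x)$, $\overline{y}_k:=x_{\mathbf{u}}(k,\overline{x})$, and the ``shifted'' ones $z_k:=x_{\mathbf{u}_{[1:M]}}(k,f(x,\mathbf{u}_0)+w)$, $\overline{z}_k:=x_{\mathbf{u}_{[1:M]}}(k,f(\overline{x},\mathbf{u}_0))$, so that the left-hand side of~\eqref{eq:increm_implicit_ineq_2} is $\sum_{k=0}^{M-1}\|z_k-\overline{z}_k\|^2$. Two observations are crucial: first, since the disturbance affects only the perturbed branch, $\overline{z}_k=\overline{y}_{k+1}$ exactly; second, applying Assumption~\ref{ass:exp_stab} to the pair $z_k$ versus the disturbance-free trajectory starting at $f(x,\mathbf{u}_0)$ (which coincides with $y_{k+1}$) yields $\|z_k-y_{k+1}\|^2\leq C\rho^k\|w\|^2$.

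With these in hand, Young's inequality applied to $\|z_k-\overline{z}_k\|\leq\|z_k-y_{k+1}\|+\|y_{k+1}-\overline{y}_{k+1}\|$ with parameter $\eta>0$, followed by summation and reindexing, would yield
\begin{align*}
\sum_{k=0}^{M-1}\|z_k-\overline{z}_k\|^2\leq (1+\eta)\bigl[V_{\delta,\mathrm{impl}}-\|x-\overline{x}\|^2+\|y_M-\overline{y}_M\|^2\bigr]+\frac{(1+1/\eta)C}{1-\rho}\|w\|^2.
\end{align*}
Bounding $\|y_M-\overline{y}_M\|^2\leq C\rho^M\|x-\overline{x}\|^2$ via Assumption~\ref{ass:exp_stab} and absorbing the excess $\eta V_{\delta,\mathrm{impl}}$ through the upper bound $V_{\delta,\mathrm{impl}}\leq c_{\delta,2}\|x-\overline{x}\|^2$ from the first part, the decrease reduces to checking that the $\|x-\overline{x}\|^2$ coefficient $\eta c_{\delta,2}-(1+\eta)(1-C\rho^M)$ can be made strictly negative. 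The hypothesis on $M$ (to be read as $C\rho^M<1$) ensures $(1+\eta)(1-C\rho^M)>0$, and choosing $\eta>0$ small enough then yields $c_{\delta,3}>0$, with $c_{\delta,4}$ absorbing the Young and geometric-series factors.

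The main subtlety I anticipate is this Young step: because we only have a quadratic \emph{upper} bound on $V_{\delta,\mathrm{impl}}$, the extra $\eta V_{\delta,\mathrm{impl}}$ term generated by the splitting has to be paid for by strict slack in the $\|x-\overline{x}\|^2$ coefficient. This is exactly why we need $C\rho^M<1$ with strict inequality, and explains why the prediction horizon $M$ in the implicit characterization must satisfy the stated lower bound.
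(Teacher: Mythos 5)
Your proposal is correct and follows essentially the same route as the paper's proof: the same telescoping identity relating the shifted disturbance-free sum to the original one, the same comparison of the disturbed and undisturbed trajectories emanating from $f(x,\mathbf{u}_0)$ (bounded by the geometric series in $\|w\|^2$), the same Young splitting with a small parameter, and the same absorption of the excess $\eta V_{\delta,\mathrm{impl}}$ term via the quadratic upper bound, exploiting $C\rho^M<1$. The only cosmetic differences are that you apply Young term-by-term rather than to the whole sums and use the infinite geometric series $C/(1-\rho)$ in place of the finite one $\tfrac{1-\rho^M}{1-\rho}C$, neither of which changes the argument.
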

\begin{pf}
The lower bound in Inequality~\eqref{eq:increm_implicit_ineq_1} holds with $c_{\delta,1}:=1$ using $M\geq 1$. 
The upper bound holds with $c_{\delta,2}:=\frac{1-\rho^M}{1-\rho}C$ using Inequality~\eqref{eq:increm_exp_stable} with a geometric series. 
Denote $\tilde{\mathbf{u}}:=\mathbf{u}_{[1,M]}$ and note that 
\begin{align}
\label{eq:increm_implicit_extend}
&V_{\delta,\impl}(f(x,\mathbf{u}_0),f(\bar{x},\mathbf{u}_0),\tilde{\mathbf{u}})
-V_{\delta,\impl}(x,\bar{x},\mathbf{u}_{[0:M-1]})\nonumber\\ 
=&\|x_{\mathbf{u}}(M,x)-x_{\mathbf{u}}(M,\bar{x})\|^2-\|x-\bar{x}\|^2\nonumber\\
\stackrel{\eqref{eq:increm_exp_stable}}{\leq}& (C\rho^M-1)\|x-\bar{x}\|^2.
\end{align}
Furthermore, for any $\epsilon>0$ we have
\begin{align*}
&\dfrac{1}{1+\epsilon}V_{\delta,\impl}(f(x,\mathbf{u}_0)+w,f(\bar{x},\mathbf{u}_0),\tilde{\mathbf{u}})\\
=&\dfrac{1}{1+\epsilon}\sum_{k=0}^{M-1}\|x_{\tilde{\mathbf{u}}}(k,f(x,\mathbf{u}_0)+w)-x_{\tilde{\mathbf{u}}}(k,f(\bar{x},\mathbf{u}_0))\|^2\\
\leq &\sum_{k=0}^{M-1}\|x_{\tilde{\mathbf{u}}}(k,f(x,\mathbf{u}_0))-x_{\tilde{\mathbf{u}}}(k,f(\bar{x},\mathbf{u}_0))\|^2\\
&+\dfrac{1}{\epsilon}\sum_{k=0}^{M-1}\|x_{\tilde{\mathbf{u}}}(k,f(x,\mathbf{u}_0)+w)-x_{\tilde{\mathbf{u}}}(k,f(x,\mathbf{u}_0))\|^2\\
=&V_{\delta,\impl}(f(x,\mathbf{u}_0),f(\bar{x},\mathbf{u}_0),\tilde{\mathbf{u}})\\
&+\dfrac{1}{\epsilon}V_{\delta,\impl}(f(x,\mathbf{u}_0)+w,f(x,\mathbf{u}_0))\\
{\leq} &V_{\delta,\impl}(x,\bar{x},\mathbf{u}_{[0,M-1]})-(1-C\rho^M)\|x-\bar{x}\|^2+\dfrac{c_{\delta,2}}{\epsilon}\|w\|^2,
\end{align*}
where the first inequality used Cauchy-Schwarz and Young’s inequality (cf.~\eqref{eq:cauchy}) and the last inequality leveraged~\eqref{eq:increm_implicit_ineq_1},\eqref{eq:increm_implicit_extend}. 
Inequality~\eqref{eq:increm_implicit_ineq_2} follows by 
multiplying this inequality by $1+\epsilon>0$ and using the upper bound~\eqref{eq:increm_implicit_ineq_1}:
\begin{align*}
&V_{\delta,\impl}(f(x,\mathbf{u}_0)+w,f(\bar{x},\mathbf{u}_0),\mathbf{u}_{[1,M]})\\
\leq&V_{\delta,\impl}(x,\bar{x},\mathbf{u}_{[0:M-1]})+\underbrace{\dfrac{1+\epsilon}{\epsilon}c_{\delta,2}}_{=:c_{\delta,4}}\|w\|^2\\ 
&-\underbrace{\left[(1+\epsilon)(1-C\rho^M)-\epsilon c_{\delta,2}\right]}_{=:c_{\delta,3}}\|x-\bar{x}\|^2,
\end{align*}
where $c_{\delta,3}>0$ follows from $C\rho^M<1$ and choosing $\epsilon>0$ sufficiently small.
$\hfill\square$
\end{pf}
Lemma~\ref{lemma:increm_implicit} ensures that for a sufficiently large horizon $M$, $V_{\delta,\impl}$ satisfies properties comparable to an incremental ISS Lyapunov function $V_\delta$ (Asm.~\ref{ass:increm_stab})\footnote{
A standard incremental Lyapunov function is given by $V_{\delta}(x,\bar{x})=\max_{\mathbf{u}\in\mathbb{U}^M}V_{\delta,\impl}(x,\bar{x},\mathbf{u})$~\cite[App.~C]{kohler2021dynamic}.}.  
The proposed MPC formulation with an implicit characterization of $V_\delta$ is given by 
\begin{subequations}
\label{eq:MPC_proposed_implicit}
\begin{align}
&\min_{\bar{x}\in\mathbb{R}^n,\mathbf{u}\in\mathbb{U}^N}\mathcal{J}_N(\bar{x},\mathbf{u})+\lambda\sum_{k=0}^{M-1}\|x_{\mathbf{u}}(k,x)-x_{\mathbf{u}}(k,\bar{x})\|^2,\\
\text{s.t. }&x_{\mathbf{u}}(k,\bar{x})\in\mathbb{X},~ k\in\mathbb{I}_{[0,N-1]},~ 
x_{\mathbf{u}}(N,\bar{x})\in\mathbb{X}_{\mathrm{f}},\\
&\mathbf{u}_k=k_{\mathrm{f}}(x_{\mathbf{u}}(k,\bar{x})),\quad k\in\mathbb{I}_{[N,M-2]}.
\end{align}
\end{subequations}
Compared to Problem~\eqref{eq:MPC_proposed}, the penalty $V_\delta$ is replaced by the finite-horizon cost from Lemma~\ref{lemma:increm_implicit}. 
Instead of designing a Lyapunov function $V_\delta$ offline, the complexity of Problem~\eqref{eq:MPC_proposed_implicit} is increased by additionally predicting a second trajectory $x_{\mathbf{u}}(k,x)$.
To ensure that this trajectory is well-defined in case $M> N+1$, we append the known terminal control law $k_{\mathrm{f}}(x)$ (Asm.~\ref{ass:terminal}) for the remaining horizon. 
We denote the  value function and minimizers of Problem~\eqref{eq:MPC_proposed_implicit} by $V_{\impl}$, $\mathbf{u}^*_{\impl}$, $\bar{x}^*_{\impl}$. 
The closed-loop system is given by
\begin{align}
\label{eq:closed_loop_impl}
x(k+1)=f(x(k),u(k))+w(k),\nonumber\\
u(k)=\pi_{\impl}(x(k)):={\mathbf{u}}^*_{\impl,0}(x). 
\end{align} 
The following theorem shows that this MPC formulation enjoys the same closed-loop properties as Problem~\eqref{eq:MPC_proposed}. 
\begin{theorem}
\label{thm:impl}
Let Assumptions~\ref{ass:standing}, \ref{ass:terminal}, \ref{ass:exp_bounds}\ref{ass:exp_bound_MPC}\ref{ass:exp_bound_ell_quad}, and \ref{ass:exp_stab} hold. 
Furthermore, suppose $M>\log(C)/\log(\rho)$. 
Then, 
Theorems~\ref{thm:global}, \ref{thm:constraints}, \ref{thm:performance} also apply to the closed-loop system~\eqref{eq:closed_loop_impl}, ensuring
 ISS, bounded constraint violation~\eqref{eq:constraint_violation}, and the performance bound~\eqref{eq:performance_slack}.
\end{theorem} 
\begin{pf}
Abbreviate $\bar{x}=\bar{x}_{\impl}^*(x)$, $\bar{x}^+=f(\bar{x},u)$, $x^+=f(x,u)+w$, and $u=\pi_{\impl}(x)=\pi_{\nom}(\bar{x})$. 
Following the arguments in Theorem~\ref{thm:global}, we show that there exist constants $\tilde{c}_i>0$, $i\in\mathbb{I}_{[1,4]}$, such that% for any $x,w\in\mathbb{R}^n$:
\begin{subequations}
\label{eq:Lyap_joint_implicit}
\begin{align}
\label{eq:Lyap_joint_implicit_1} 
\tilde{c}_{1}\|x\|^2\leq& {V}_{\impl}(x)\leq \tilde{c}_2\|x\|^2,\\
\label{eq:Lyap_joint_implicit_2}
{V}_{\impl}(x^+)\leq &V_{\impl}(x)-\tilde{c}_3\|x\|^2+\tilde{c}_4\|w\|^2,
\end{align}
\end{subequations}
which implies that $V_{\impl}$ is an ISS Lyapunov function and correspondingly the closed-loop system is ISS.
The constraint satisfaction and performance results follow then analogously to Theorems~\ref{thm:constraints} and \ref{thm:performance}, using Inequalities~\eqref{eq:Lyap_joint_implicit} and \eqref{eq:increm_implicit_ineq}. 
\\ 
\textbf{Part I: }
The lower and upper bound in Inequality~\eqref{eq:Lyap_joint_implicit_1} hold analogous to Inequality~\eqref{eq:Lyap_joint_1} in Theorem~\ref{thm:global}
using Lemmas~\ref{lemma:MPC} and \ref{lemma:increm_implicit}. \\
\textbf{Part II: }
By ${\mathbf{u}}\in\mathbb{U}^{N+1}$, we denote the minimizing input $\mathbf{u}^*(\bar{x})\in\mathbb{U}^N$ appended with terminal control law $k_{\mathrm{f}}$, i.e., $r{\mathbf{u}}_k=\mathbf{u}^*_k(\bar{x})$, $k\in\mathbb{I}_{[0,N-1]}$, ${\mathbf{u}}_N=k_{\mathrm{f}}(x_{\mathbf{u}}(N,\bar{x}))$.
Considering the feasible candidate $\tilde{\mathbf{u}}:={\mathbf{u}}_{[1,N]}\in\mathbb{U}^N$, $\bar{x}^+$, the value function satisfies
\begin{align*}
&V_{\impl}(x^+)\leq \mathcal{J}_N(\bar{x}^+,\tilde{\mathbf{u}})+\lambda V_{\delta,\impl}(x^+,\bar{x}^+,\tilde{\mathbf{u}})\\
\stackrel{\eqref{eq:nom_MPC_Lyap_2}, \eqref{eq:increm_implicit_ineq_2}}{\leq} &V_{\nom}(\bar{x})-\ell(\bar{x},\pi_{\nom}(\bar{x}))+\lambda c_{\delta,4}\|w\|^2\\
&+\lambda \left[V_{\delta,\impl}(x,\bar{x},{\mathbf{u}}_{[0,N-1]})-c_{\delta,3}\|x-\bar{x}\|^2\right]\\
\leq &V_{\impl}(x)\\
&-\underbrace{\frac{1}{2}\min\{\sigma_{\min}(Q),\lambda c_{\delta,3}\}}_{=:\tilde{c}_3}\|x\|^2+\underbrace{c_{\delta,4}\lambda}_{=:\tilde{c}_4}\|w\|^2,%\\.
\end{align*}
where the last inequality used $2\|a\|^2+2\|b\|^2\geq \|a+b\|^2 $ with $a=\bar{x}$ and $b=x-\bar{x}$. 
Note that $\tilde{c}_3>0$ since $c_{\delta,3}>0$ according to Lemma~\ref{lemma:increm_implicit}.
$\hfill\square$
\end{pf}
%\textit{Discussion: }
Compared to Problem~\eqref{eq:MPC_proposed}, the formulation in Problem~\eqref{eq:MPC_proposed_implicit} does not require an explicit formula for the incremental Lyapunov function $V_\delta$ (Asm.~\ref{ass:increm_stab}). 
Instead, it only requires the offline choice of a horizon $M>0$ (depending on the stability properties) and the online optimization requires an additional simulation to determine $x_{\mathbf{u}}(\cdot,x)$, which facilitates implementation.  
Drawbacks include the fact that \textit{exponential} stability needs to be assumed and for some systems a large horizon $M$ might be needed, which increases computational complexity. 

%!TEX root = ./Soft.tex
%%%%%%%%%%%%%%%%%%%%%%%%%%%%%%%%%%%%%%%%%%%%%%%%%%%%%%%%%%%%%%%%%%%%%%%%%%%%%%% 
\subsection{Lyapunov stable systems and semi-global results}
\label{app:Lyap_stable} 
In the following, we provide a stability analysis in case the  \textit{asymptotic} stability condition (Asm.~\ref{ass:increm_stab}) is relaxed as follows. 
%extend the stability analysis (Thm.~\ref{thm:global}) by relaxing the \textit{asymptotic} stability condition (Asm.~\ref{ass:increm_stab}). 
\begin{assumption}
\label{ass:increm_stab_marginal}
(Incremental stability)
It holds that $V_\delta(x,z)=\|x-z\|_P^2$ with $P$ positive definite.
Furthermore, for any $x,z\in\mathbb{R}^n$, and all $u\in\mathbb{U}$, it holds that
\begin{align}
\label{eq:increm_Lyap_marginal_2}
&V_{\delta}(f(x,u),f(z,u))\leq V_{\delta}(x,z).
\end{align}
\end{assumption} 
Compared to Assumption~\ref{ass:increm_stab}, Inequality~\eqref{eq:increm_Lyap_marginal_2} only requires that $V_\delta$ is non-increasing. 
This ensures applicability to many physical systems that have invariant quantities (e.g., energy), which are only Lyapunov stable. 
Although we consider a quadratic function $V_\delta$, we expect that this restriction can be relaxed using contraction metrics and the associated Riemannian energy~\cite{manchester2017control}. 
The following theorem shows (semi-global) \textit{asymptotic} stability with this relaxed condition, assuming there are no disturbances.
\begin{theorem}
\label{thm:semi_global}
Let Assumptions~\ref{ass:standing}, \ref{ass:terminal}, \ref{ass:exp_bounds}\ref{ass:exp_bound_MPC}, and \ref{ass:increm_stab_marginal} hold.
Then, for any constant $\bar{V}>0$, there exists a constant $\lambda_{\bar{V}}>0$, such that for any $\lambda>\lambda_{\bar{V}}$ and any $x_0\in\mathbb{X}_{\mathrm{ROA}}:=\{x\in\mathbb{R}^n|V_{\mathrm{slack}}(x)\leq \bar{V}\}$, the origin is exponentially stable for the closed-loop system~\eqref{eq:closed_loop} with $w(k)=0$, $k\in\mathbb{I}_{\geq 0}$. 
\end{theorem}
\begin{pf}
Note that $V_\delta$ satisfies Inequalities~\eqref{eq:increm_Lyap} with $\alpha_{1,\delta}(r)=\sigma_{\min}(P)r^2$, $\alpha_{2,\delta}=\sigma_{\max}(P)r^2$ and $\alpha_{3,\delta}=0$. 
Hence, $V_{\mathrm{slack}}$ is non-increasing using~\eqref{eq:global_decrease_1} and $\|w\|=0$, which implies that $x(k)\in\mathbb{X}_{\mathrm{ROA}}$ holds recursively.
Furthermore, Inequalities~\eqref{eq:Lyap_joint_1} remain true with $\tilde{\alpha}_1$ quadratic, i.e., $\tilde{\alpha}_1(r)=\tilde{c}_1r^2$.
In the following, we use a case distinction based on whether $x\in\mathbb{X}_N$ to derive a lower bound on $\|\bar{x}_{\slack}^*(x)\|$, which then implies exponential stability.\\
\textbf{Part I: }Consider $x\in\mathbb{X}_N$. 
Clearly, $V_{\slack}(x)\leq V_{\mathrm{norm}}(x)$ and hence 
\begin{align*}
\lambda \sigma_{\min}(P)\|x-\bar{x}^*(x)\|^2\leq &\lambda V_{\delta}(x,\bar{x}^*(x))\\
\leq& V_{\mathrm{nom}}(x)\leq c_2\|x\|^2.
\end{align*}
Using the reverse triangle inequality, we arrive at 
\begin{align*}
\|\bar{x}^*(x)\|\geq \|x\|-\|\bar{x}^*(x)-x\|\geq \left(1-\sqrt{\dfrac{c_2}{\lambda\sigma_{\min}(P)}}\right)\|x\|.
\end{align*}
\textbf{Part II: }
Consider $x\notin\mathbb{X}_N$. 
Given that $0\in\text{int}(\mathbb{X}_{\mathrm{f}})$, there exists a constant $r>0$, such that $\|\bar{x}\|\leq r$ implies $\bar{x}\in\mathbb{X}_{\mathrm{f}}\subseteq\mathbb{X}_N$. 
Consider the candidate $\bar{x}=x\dfrac{r}{\|x\|}\in\mathbb{X}_N$, which satisfies $V_{\mathrm{nom}}(\bar{x})\leq\alpha_2(r)= c_2r^2$. 
Furthermore, 
\begin{align*}
&V_{\delta}(x,\bar{x})=\|x-\bar{x}\|_P^2
=\left(1-\dfrac{r}{\|x\|}\right)^2V_\delta(x,0).
\end{align*}
Note that $x\in\mathbb{X}_{\mathrm{ROA}}$, i.e., $V_{\mathrm{slack}}(x)\leq \bar{V}$, which implies
\begin{align}
\label{eq:semigloba_interm_1} 
\tilde{c}_1\|x\|^2\stackrel{\eqref{eq:Lyap_joint_1}}{\leq}V_{\mathrm{slack}}(x)\leq \bar{V}.
\end{align}
Hence, we get 
\begin{align*}
\left(1-\dfrac{r}{\|x\|}\right)\stackrel{\eqref{eq:semigloba_interm_1}}{\leq} 1-\dfrac{r\sqrt{\tilde{c}_1}}{\sqrt{\bar{V}}}=:\rho_{\bar{V}}\in(0,1). 
\end{align*} 
Using this feasible candidate solution, we have
\begin{align}
\label{eq:semigloba_interm_2}
&\lambda V_{\delta}(x,\bar{x}_{\slack}^*(x))\leq V_{\mathrm{slack}}(x)\nonumber\\
\leq&\lambda V_{\delta}(x,\bar{x})+V_{\mathrm{nom}}(\bar{x})\leq \lambda \rho_{\bar{V}}^2V_\delta(x,0)+c_2r^2. 
\end{align}
Applying Cauchy-Schwarz and Young’s inequality with some $\epsilon>0$ to the quadratic function $V_\delta$ yields
\begin{align*}
\|x\|_P^2\leq (1+\epsilon)\|x-\bar{x}_{\slack}^*(x)\|_P^2+\dfrac{1+\epsilon}{\epsilon}\|\bar{x}_{\slack}^*(x)\|_P^2.
\end{align*}
This implies
\begin{align*}
&\dfrac{1}{1+\epsilon}\|x\|_P^2-\dfrac{1}{\epsilon}\|\bar{x}_{\slack}^*(x)\|_P^2\leq \|x-\bar{x}_{\slack}^*(x)\|_P^2\\
\stackrel{\eqref{eq:semigloba_interm_2}}{\leq}& \rho_{\bar{V}}^2\|x\|_P^2+\dfrac{c_2}{\lambda}r^2.
\end{align*}
Thus, we get
\begin{align*} 
\|\bar{x}_{\slack}^*(x)\|_P^2\geq 
\epsilon\left(\dfrac{1}{1+\epsilon}-\rho_{\bar{V}}^2\right)\|x\|_P^2-\epsilon\dfrac{c_2r^2}{\lambda}.
\end{align*}
Choosing $\epsilon=\dfrac{1-\rho_{\bar{V}}^2}{1+\rho_{\bar{V}}^2}>0$, we have $\dfrac{1}{1+\epsilon}-\rho_{\bar{V}}^2=\dfrac{1-\rho_{\bar{V}}^2}{2}>0$. 
Using additionally $\|x\|\geq r$, we get
\begin{align*}
&\sigma_{\max}(P)\|\bar{x}_{\slack}^*(x)\|^2\geq 
\epsilon\dfrac{1-\rho_{\bar{V}}^2}{2}\|x\|_P^2-\epsilon\dfrac{c_2r^2}{\lambda}\\
\geq&\epsilon\left(\dfrac{1-\rho_{\bar{V}}^2}{2}\sigma_{\min}(P)-\dfrac{c_2}{\lambda}\right)\|x\|^2.
\end{align*}
\textbf{Part III: }
Combing the results in Part I and II, we get 
\begin{align*}
\|\bar{x}_{\slack}^*(x)\|^2\geq \epsilon_{\bar{V}}\|x\|^2\quad \forall x\in\mathbb{X}_{\mathrm{ROA}},
\end{align*}
with $\epsilon_{\bar{V}}>0$ for $\lambda>\lambda_{\bar{V}}:=\dfrac{2c_2}{(1-\rho_{\bar{V}}^2)\sigma_{\min}(P)}$. 
Using Inequality~\eqref{eq:global_decrease_1} with $\alpha_{3,\delta}=0$, $w=0$, and $\alpha_\ell$ quadratic, we arrive at
\begin{align*}
V_{\mathrm{slack}}(x^+)-V_{\mathrm{slack}}(x)\leq &- c_\ell\|\bar{x}_{\slack}^*(x)\|^2\\
\leq& -c_\ell\epsilon_{\bar{V}}\|x\|^2.
\end{align*}
Combining this bound with Inequalities~\eqref{eq:Lyap_joint_1}, we obtain exponential stability with the Lyapunov function $V_{\mathrm{slack}}$.
$\hfill\square$
\end{pf}
This result ensures that for any compact set $\mathbb{X}_{\mathrm{RoA}}$, we can choose a large enough weight $\lambda$, such that the closed-loop system is exponentially stable. 
The result is \textit{semi-global} in the sense that the value required for the weight $\lambda$ and the convergence rate deteriorate as $\bar{V}\rightarrow\infty$, which is in contrast to the \textit{uniform} guarantees in Theorem~\ref{thm:global}. 

\textit{Comparison to existing works:} 
In the linear setting (Sec.~\ref{sec:linear}), Assumption~\ref{ass:increm_stab_marginal} can be ensured by choosing a positive definite matrix $P$ according to  $A^\top P A-P\preceq 0$, which is feasible if $A$ is Lyapunov stable. 
Existing stability results for Lyapunov stable systems~\cite[Thm.~4]{rawlings1993stability}, \cite[Thm.~3]{zheng1995stability} require a sufficiently large prediction horizon $N$ depending on the initial condition~\cite[Rk.~3]{zheng1995stability}. 
In~\cite[Cor.~4]{grimm2005model}, this requirement was relaxed to a uniform bound on the prediction horizon $N$ (independent of the initial condition), by utilizing an MPC formulation without terminal constraints. 
Another solution to this problem involves an adjustment of the input regularization to ensure that the corresponding updated terminal set can be reached within the prediction horizon~\cite{casavola1999global}. 
In contrast to these classical MPC results, the proposed formulation directly achieves close-to-optimal performance and semi-global asymptotic stability without requiring specific bounds on the prediction horizon $N$, restriction to linear systems, or other adjustments.

%

%!TEX root = ./Soft.tex
%%%%%%%%%%%%%%%%%%%%%%%%%%%%%%%%%%%%%%%%%%%%%%%%%%%%%%%%%%%%%%%%%%%%%%%%%%%%%%%
\subsection{Unstable systems and soft input constraints} 
\label{app:input}
In the following, we consider possible unstable systems that can be stabilized with a nonlinear feedback $\kappa$ and we allow for small/temporary violations of the input constraints $u\in\mathbb{U}$, similar to soft constrained MPC formulations~\cite{feller2016relaxed,oancea2023relaxed}. 
\begin{assumption}
\label{ass:increm_stabilizable}
(Incremental stabilizability)
There exists a control law $\kappa:\mathbb{R}^n\times\mathbb{R}^n\times\mathbb{R}^m \rightarrow\mathbb{R}^m$  and functions $\alpha_{\delta,1},\alpha_{\delta,2},\alpha_{\delta,3},\alpha_{\delta,4}\in\mathcal{K}_\infty$, such that for all 
$x,z,w\in\mathbb{R}^n$, and all $u\in\mathbb{U}$, we have
\begin{subequations}
\label{eq:increm_Lyap_stabilizable}
\begin{align}
\label{eq:increm_Lyap_stabilizable_1}
&\alpha_{\delta,1}(\|x-z\|)\leq V_{\delta}(x,z)\leq \alpha_{\delta,2}(\|x-z\|),\\
\label{eq:increm_Lyap_stabilizable_2}
&V_{\delta}(f(x,\kappa(x,z,v))+w,f(z,u))-V_{\delta}(x,z)\nonumber\\
\leq &-\alpha_{\delta,3}(\|x-z\|)+\alpha_{\delta,4}(\|w\|).\
\end{align}
\end{subequations}
\end{assumption} 
Compared to Assumption~\ref{ass:increm_stab}, condition~\eqref{eq:increm_Lyap_stabilizable_2} holds when applying a feedback $\kappa$ instead of an open-loop input. 
In addition, we  require Lipschitz continuity of $\kappa$.  
\begin{assumption} (Lipschitz continuous feedback) 
\label{ass:exp_bounds_v2} 
There exists a constant $c_\kappa\geq 0$, such that $\|\kappa(x,z,v)-v\|\leq c_\kappa\|x-z\|$ for any $x\in\mathbb{R}^n,z\in\mathbb{X},v\in\mathbb{U}$ 
\end{assumption}  
The proposed approach is still based on Problem~\eqref{eq:MPC_proposed}, however, the closed-loop applied input is based on the feedback law $\kappa$. 
The closed-loop system is given by 
\begin{align}
\label{eq:closed_loop_kappa}
x(k+1)=&f(x(k),u(k))+w(k),\\
u(k)=&\pi_{\Input}(x(k)):=\kappa(x,\bar{x}_{\slack}^*(x),\mathbf{u}^*_{\slack,0}(x)). \nonumber
\end{align}
The following result extends Theorems~\ref{thm:global}, \ref{thm:constraints}, and \ref{thm:performance} to account for the feedback $\kappa$.
\begin{theorem}
Let Assumptions~\ref{ass:standing}, \ref{ass:terminal}, and \ref{ass:increm_stabilizable} hold. 
Then, the closed-loop system~\eqref{eq:closed_loop_kappa} is ISS with the continuous ISS Lyapunov function $V_{\slack}$. 
Suppose further that Assumptions~\ref{ass:exp_bounds} and \ref{ass:exp_bounds_v2} hold. 
There exist constants $c_{\mathbb{X,U},i}\geq 0$, $i\in\mathbb{I}_{[1,3]}$, such that for any initial conditions $x_0\in\mathbb{R}^n$, weighting $\lambda>0$, and disturbances $w(k)\in\mathbb{R}^n$, $k\in\mathbb{I}_{\geq 0}$, \change{and any time $T\in\mathbb{I}_{\geq 0}$}, the closed-loop system~\eqref{eq:closed_loop_kappa} satisfies 
\begin{align}
\label{eq:constraint_violation_input}
&\sum_{k=0}^{T-1}\|x(k)\|_{\mathbb{X}}^2+\|u(k)\|_{\mathbb{U}}^2\\
\leq& c_{\mathbb{X,U},1}\|x_0\|_{\mathbb{X}_N}^2+\dfrac{c_{\mathbb{X,U},2}}{\lambda}+c_{\mathbb{X,U},3}\sum_{k=0}^{T-1} \|w(k)\|^2.\nonumber
\end{align}
Furthermore, for any $x_0\in\mathbb{X}_N$, the closed-loop system~\eqref{eq:closed_loop_kappa} satisfies
\begin{align}
\label{eq:performance_slack_input}
&\dfrac{1}{1+\epsilon_\lambda}\sum_{k=0}^{T-1}\ell(x(k),u(k))\leq V_{\nom}(x_0)+\lambda c_{4,\delta}\sum_{k=0}^{T-1} \|w(k)\|^2,\nonumber\\
&\epsilon_\lambda:= \dfrac{1}{\lambda}\frac{\sigma_{\max}(Q)+c_\kappa^2\sigma_{\max}(R)}{c_{3,\delta}}\in(0,\infty).%\in(0,1].
\end{align}
\end{theorem}
\begin{pf}
The proof follows the arguments from Lemma~\ref{lemma:equiv_relaxed} and Theorems~\ref{thm:global}, \ref{thm:constraints}, \ref{thm:performance}. \\
\textbf{Part I: }
Abbreviate $\bar{x}=\bar{x}^*_{\slack}$ and $\bar{u}=\pi_{\nom}(\bar{x})\in\mathbb{U}$. 
Analogous to Lemma~\ref{lemma:equiv_relaxed}, it holds that
$\pi_{\Input}(x)=\kappa(x,\bar{x},\bar{u})$. \\ 
\textbf{Part II: }
Inequality~\eqref{eq:Lyap_joint_1} from Theorem~\ref{thm:global} remains valid and analogous to~\eqref{eq:Lyap_joint_2} Inequality~\eqref{eq:increm_Lyap_stabilizable_2} implies
\begin{align}
\label{eq:Lyap_joint_stab_2}
&V_{\slack}(f(x,\pi_{\Input}(x))+w)-V_{\slack}(x)\nonumber\\
\leq &-\tilde{\alpha}_3(\|x\|)+\tilde{\alpha}_4(\|w\|).
\end{align}
Hence, the closed-loop system~\eqref{eq:closed_loop_kappa} is ISS with the continuous ISS Lyapunov function $V_{\slack}$. \\
%Constraint violation
\textbf{Part III: } 
Inequality~\eqref{eq:Lyap_exp_ISS} holds analogous to Theorem~\ref{thm:constraints} using~\eqref{eq:Lyap_joint_stab_2}. 
It holds that 
\begin{align*}
\|u\|_{\mathbb{U}}^2\leq \|u-\bar{u}\|^2=\|\kappa(x,\bar{x},\bar{u})-\bar{u}\|^2\stackrel{\mathrm{Asm.}~\ref{ass:exp_bounds_v2}}{\leq} c_\kappa^2\|x-\bar{x}\|^2.
\end{align*}
In combination with~\eqref{eq:constraint_violation_1}, this implies
\begin{align*}
\|x\|_{\mathbb{X}}^2+\|u\|_{\mathbb{U}}^2\stackrel{\eqref{eq:constraint_violation_1}}{\leq} (1+c_\kappa^2)\|x-\bar{x}\|^2\stackrel{\eqref{eq:constraint_violation_1}}{\leq}  \dfrac{ (1+c_\kappa^2)}{c_{\delta,1}\lambda}V_{\slack}(x).
\end{align*}
The remainder of the proof is analogous to Theorem~\ref{thm:constraints}, resulting in~\eqref{eq:constraint_violation_input} with $c_{\mathbb{X,U},i}:= (1+c_\kappa^2)c_{\mathbb{X},i}$, $i\in\mathbb{I}_{[1,3]}$. \\
\textbf{Part IV: } 
Abbreviate $\bar{x}(k)=\bar{x}^*(x(k))$, $\bar{u}(k)=\bar{\mathbf{u}}_0^*(x(k))$, $\Delta x(k):=x(k)-\overline{x}(k)$, $\Delta u(k):=u(k)-\overline{u}(k)$.
Analogous to Inequality~\eqref{eq:performance_telescopic} in Theorem~\ref{thm:performance}, the closed-loop system~\eqref{eq:closed_loop_kappa} satisfies 
\begin{align}
\label{eq:performance_telescopic_v2}
&\sum_{k=0}^{T-1} \ell(\overline{x}(k),\bar{u}(k))+\lambda c_{3,\delta}\|\Delta x(k)\|^2\\
\leq& V_{\nom}(x_0)+\sum_{k=0}^{T-1}\lambda c_{4,\delta}\|w(k)\|^2.\nonumber
\end{align}
Given that $\ell$ is quadratic, we can use Cauchy-Schwarz and Young’s inequality with some $\epsilon>0$ to ensure:
\begin{align*}
&~\dfrac{1}{1+\epsilon}\ell(x(k),u(k))\\
\leq&~\ell(\overline{x}(k),u(k))+\dfrac{1}{\epsilon}\ell(\Delta x(k),\Delta u(k))\nonumber\\
\stackrel{\mathrm{Asm.}~\ref{ass:exp_bounds_v2}}{\leq}&~\dfrac{\sigma_{\max}(Q)+c_\kappa^2\sigma_{\max}(R)}{\epsilon}\|\Delta x(k)\|^2\\
=&~c_{\delta,3}\lambda\|\Delta x(k)\|^2,\nonumber
\end{align*}
where the last equation uses $\epsilon=\epsilon_{\lambda}:=\frac{\sigma_{\max}(Q)+c_\kappa^2\sigma_{\max}(R)}{\lambda c_{3,\delta}}>0$. 
Plugging this bound into~\eqref{eq:performance_telescopic_v2} yields the performance bound~\eqref{eq:performance_slack}. $\hfill\square$
\end{pf}
We showed that the proposed framework naturally extends to unstable systems. 
Our main requirement is that we know a feedback $\kappa$ that ensures stability and a corresponding Lyapunov function $V_\delta$, which can also be relaxed considering Appendix~\ref{app:implicit}. 
We obtain the same desired guarantees: ISS and suitable bounds on the constraint violation for arbitrary disturbances $w$; and 
close-to-optimal performance on the original feasible set. 
Contrary to the formulation originally proposed  in Section~\ref{sec:proposed}, in this section we allowed for temporary violations of the input constraints $\mathbb{U}$, similar to~\cite{feller2016relaxed,oancea2023relaxed}.

%!TEX root = ./Soft.tex
%%%%%%%%%%%%%%%%%%%%%%%%%%%%%%%%%%%%%%%%%%%%%%%%%%%%%%%%%%%%%%%%%%%%%%%%%%%%%%%
\section{Incorperating a robust design}
\label{app:robust}
The theoretical results (Sec.~\ref{sec:theory}) ensure that the closed-loop properties are not fragile w.r.t. disturbances $w$. 
However, the corresponding guarantees are largely qualitative: even if a disturbance bound is known, quantitative bounds on the constraint violation are challenging to characterize. 
In the following, we address this issue by merging the proposed design~\eqref{eq:MPC_proposed} with a standard robust MPC design~\cite{bayer2013discrete,singh2023robust}.  
This combines the two complementary advantages 
\begin{itemize}
\item robust constraint satisfaction for a user specified disturbance bound;
\item recursive feasibility and robust stability also in case of \change{unbounded} disturbances.
\end{itemize}
We first present the proposed methodology and theoretical results (App.~\ref{app:robust_1}) before elaborating on the underlying robust MPC design (App.~\ref{app:robust_prelim}), providing the complete theoretical proof (App.~\ref{app:robust_proof}), and finally discussing the approach (App.~\ref{sec:robust_discussion}). 
\subsection{Robustified approach}
\label{app:robust_1}
Let us consider a disturbance bound $\mathbb{W}=\{w\in\mathbb{R}^n|~\|w\|\leq\bar{w}\}$ with some $\bar{w}>0$ and the quadratic bounds on $V_\delta$ (Asm.~\ref{ass:exp_bounds}\ref{ass:exp_bounds_alpha_delta}). 
Given this, existing robust MPC formulations~\cite{bayer2013discrete,singh2023robust} provide robust closed-loop guarantees for all disturbances satisfying $w(k)\in\mathbb{W}$ (cf. App.~\ref{app:robust_prelim}). 
The following formulation merges such a robust MPC design with the relaxed initial state constraint~\eqref{eq:MPC_proposed} to deal with outlier disturbances and provide a large region of attraction: 
%In order to deal with outlier disturbances and provide a large region of attraction, we merge such a robust MPC design~\cite{bayer2013discrete,singh2023robust} with the relaxed initial state constraint~\eqref{eq:MPC_proposed}:
\begin{align}
\label{eq:MPC_tube_relax}
&\min_{\bar{x}\in\mathbb{R}^n,{\mathbf{u}}\in\mathbb{U}^N,s\in\mathbb{R}_{\geq 0}}\mathcal{J}_N(\bar{x},{\mathbf{u}})+\lambda s^2\\ 
\text{s.t. }&x_{{\mathbf{u}}}(k,\bar{x})\in\bar{\mathbb{X}},~ k\in\mathbb{I}_{[0,N-1]},~ x_{{\mathbf{u}}}(N,\bar{x})\in\bar{\mathbb{X}}_{\mathrm{f}},\nonumber\\
&\sqrt{V_{\delta}(x,\bar{x})}\leq \bar{\delta}+s.\nonumber
\end{align}
Compared to the robust MPC formulations~\cite{bayer2013discrete,singh2023robust}, the initial state constraint is relaxed with a slack variable $s\geq 0$, which is penalized in the cost. 
Hence, Problem~\eqref{eq:MPC_tube_relax} provides a simple-to-implement modification to robust MPC schemes, which avoids feasibility issues in case of \change{unexpectedly large} disturbances. 
Compared to the formulation proposed in Section~\ref{sec:proposed}, the state constraint $\mathbb{X}$ is replaced by a tightened constraint set $\bar{\mathbb{X}}$ and the initial state constraint is relaxed by a constant $\bar{\delta}>0$. 
Specifically,  $\bar{\delta}=\frac{\sqrt{c_{\delta,4}}}{1-\rho_\delta}\bar{w}>0$ is chosen such that $\Omega=\{(x,z)|~V_\delta(x,z)\leq \bar{\delta}^2\}$, is a robustly positively invariant (RPI) set. 
Ellipsoidal inner and outer approximation of this set are given by $\underline{\Omega}_{\bar{\delta}}:=\{x\in\mathbb{R}^n|~c_{2,\delta}\|x\|^2\leq \bar{\delta}^2\}$ and $\bar{\Omega}_{\bar{\delta}}:=\{x\in\mathbb{R}^n|~c_{1,\delta}\|x\|^2\leq \bar{\delta}^2\}$ 
and the tightened constraint is formulated as $\bar{\mathbb{X}}:=\mathbb{X}\ominus\bar{\Omega}_{\bar{\delta}}$, where $\ominus$ denotes the Pontryagin difference. 
We denote an optimal nominal state and input sequence by $\bar{x}^*_{\tubeSlack}(x)$, ${\mathbf{u}}^*_{\tubeSlack}(x)$ and the value function of Problem~\eqref{eq:MPC_tube_relax} by $V_{\tubeSlack}(x)$. 
The closed-loop system is given by
\begin{align}
\label{eq:closed_loop_robust_slack}
x(k+1)=&f(x(k),u(k))+w(k),\nonumber\\
u(k)=&\pi_{\tubeSlack}(x(k)):= {\mathbf{u}}^*_{\tubeSlack,0}(x). 
\end{align}
The set of feasible nominal state $\bar{x}$ is given by $\bar{\mathbb{X}}_N\subseteq\bar{\mathbb{X}}$. 
The conditions on the terminal set (Asm.~\ref{ass:terminal_robust}) are adjusted to also satisfy the tightend constraints.
\begin{assumption}
\label{ass:terminal_robust} (Terminal ingredients)
There exists a terminal control law $k_{\mathrm{f}}:\bar{\mathbb{X}}_{\mathrm{f}}\rightarrow\mathbb{U}$ and a function $\alpha_{\mathrm{f}}\in\mathcal{K}_\infty$, such that for any $x\in\bar{\mathbb{X}}_{\mathrm{f}}$, we have
\begin{itemize}
\item Terminal penalty: $V_{\mathrm{f}}(x^+)\leq V_{\mathrm{f}}(x)-\ell(x,u)$;
\item Constraint satisfaction: $(x,u)\in\bar{\mathbb{X}}\times\mathbb{U}$;
\item Positive invariance: $x^+\in\bar{\mathbb{X}}_{\mathrm{f}}$;
\item Weak controllability: $V_{\mathrm{f}}(x)\leq \alpha_{\mathrm{f}}(\|x\|)$, $0\in\mathrm{int}(\bar{\mathbb{X}}_{\mathrm{f}})$, $\bar{\mathbb{X}}_N$ is compact;
 \end{itemize} \vspace{-2mm}
with $x^+=f(x,u)$ and $u=k_{\mathrm{f}}(x)$. 
\end{assumption} 
\begin{theorem}
\label{thm:RMPC_relax}
Let Assumptions~\ref{ass:standing}, \ref{ass:increm_stab}, \ref{ass:exp_bounds}\ref{ass:exp_bound_MPC},\ref{ass:exp_bounds_alpha_delta}, and \ref{ass:terminal_robust} hold. 
There exist constants $\tilde{\rho}\in[0,1)$, $c_{\mathrm{w}}$, $\bar{c}_{\mathbb{X},1}$, $\bar{c}_{\mathbb{X},2}$, $\bar{c}_{\mathbb{X},3}\geq 0$, such that for any initial conditions $x_0\in\mathbb{R}^n$, weighting $\lambda>0$, and any disturbances $w(k)\in\mathbb{R}^n$, $k\in\mathbb{I}_{\geq 0}$,  \change{and any time $T\in\mathbb{I}_{\geq 0}$}, the closed-loop system~\eqref{eq:closed_loop_robust_slack} satisfies: 
% for any $\lambda>0$
\begin{align}
\label{eq:Lyap_robust}
&V_{\tubeSlack}(x(k+1))
\leq\tilde{\rho} V_{\tubeSlack}(x(k))+\lambda c_{\mathrm{w}}\|w(k)\|_{\mathbb{W}}^2,\\ 
\label{eq:constraint_robust}
&\sum_{k=0}^{T-1}\|x(k)\|_{\mathbb{X}}^2
\leq \bar{c}_{\mathbb{X},1}\|x_0\|_{\bar{\mathbb{X}}_N\oplus\underline{\Omega}_{\bar{\delta}}}^2+\dfrac{\bar{c}_{\mathbb{X},2}}{\lambda}+ \bar{c}_{\mathbb{X},3}\sum_{k=0}^{T-1} \|w(k)\|_{\mathbb{W}}^2. %\\
\end{align}
\end{theorem}(App.~\ref{app:robust_proof})
The proof is a natural combination of Theorems~\ref{thm:global}, \ref{thm:constraints} and robust MPC results~\cite{bayer2013discrete,singh2023robust}, see Appendix~\ref{app:robust_proof} for details. 
\change{Standard robust MPC designs suppose that the initial condition is feasible and the disturbances lie in a known set, i.e., $\|w(k)\|_{\mathbb{W}}=0$, $\||x_0\|_{\bar{\mathbb{X}}_N\oplus\underline{\Omega}_{\bar{\delta}}}=0$.} 
%In the standard robust MPC setting, we assume initial feasibility of a robust MPC problem and suitable bounds on the disturbances, i.e., $\|w(k)\|_\mathbb{W}=0$. 
\change{Since Inequality~\eqref{eq:constraint_robust} holds with uniform constants $\bar{c}_{\mathbb{X},i}\ge 0$, the constraint violation can be rendered arbitrarily small by increasing the weight $\lambda$.} 
%\change{The fact that Inequality~\eqref{eq:constraint_robust} holds with uniform constants $\bar{c}_{\mathbb{X},i}\geq 0$ ensures that the constraint violation becomes arbitrarily small in this case if we increase the weight $\lambda$.} 
%In this case, Inequality~\eqref{eq:constraint_robust} ensures that the constraint violation becomes arbitrarily small as we increase the weight $\lambda$
Furthermore, Inequality~\eqref{eq:Lyap_robust} ensures that the value function $V_{\tubeSlack}$ exponentially converges to zero. 
% and thus the true state $x(k)$ converges to an RPI set around the origin. 
\change{Thus, the state $x(k)$ converges to the known RPI set $\{x|~V_\delta(x,0)\leq \bar{\delta}^2\}\subseteq \bar{\Omega}_{\bar{\delta}}$.} 
% $\{x\in\mathbb{R}^n|~V_\delta(x,0)\leq \bar{\delta}^2\}\subseteq\bar{\Omega}_{\bar{\delta}}\subseteq\mathbb{X}$
Thus, we recover the closed-loop properties of robust MPC schemes in case $w(k)\in\mathbb{W}$.
Given a corresponding robust MPC formulation, implementation requires only softening the initial state constraint with a penalty.
This makes the proposed approach very attractive for practical application. 

\subsection{Preliminaries: Robust MPC design}
\label{app:robust_prelim}
To provide context for the proposed design and its theoretical guarantees, we recap a basic robust MPC design and its theoretical properties. 
Analogous to the rigid tube MPC formulations in~\cite{bayer2013discrete,singh2023robust}, we construct a robustly positively invariant (RPI) set using the incremental Lyapunov function (Asm.~\ref{ass:increm_stab}).
\begin{lemma}
\label{lemma:RPI}
Let Assumptions~\ref{ass:increm_stab} and \ref{ass:exp_bounds}\ref{ass:exp_bounds_alpha_delta} hold. 
There exist constants $\bar{\delta}>0$, $\rho_\delta\in[0,1)$, such that for any $x,z,w\in\mathbb{R}^n$, $u\in\mathbb{U}$, we have 
\begin{align}
\label{eq:RPI}
&\sqrt{V_\delta(f(x,u)+w,f(z,u))}- \bar{\delta}\\
\leq&\rho_\delta\max \{\sqrt{V_\delta(x,z)}-\delta,0\}+\sqrt{c_{4,\delta}}\|w\|_{\mathbb{W}}.\nonumber 
\end{align}
\end{lemma}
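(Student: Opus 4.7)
The plan is to reduce the claim to a simple contraction on $\sqrt{V_\delta}$ and then do a case analysis in $\sqrt{V_\delta}$ versus $\delta$ and in $\|w\|$ versus $\overline w$. First, under the quadratic bounds, Inequality~\eqref{eq:increm_Lyap_2} combined with $c_{1,\delta}\|x-z\|^2\le V_\delta(x,z)\le c_{2,\delta}\|x-z\|^2$ gives
\begin{align*}
V_\delta(f(x,u)+w,f(z,u))\le \tilde\rho\, V_\delta(x,z) + c_{4,\delta}\|w\|^2,
\end{align*}
with $\tilde\rho:=1-c_{3,\delta}/c_{2,\delta}\in[0,1)$, exactly as in the derivation of~\eqref{eq:Lyap_exp_ISS}. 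Using subadditivity of the square root ($\sqrt{a+b}\le\sqrt a+\sqrt b$ for $a,b\ge 0$) yields
\begin{align*}
\sqrt{V_\delta(f(x,u)+w,f(z,u))}\le \rho_\delta \sqrt{V_\delta(x,z)} + \sqrt{c_{4,\delta}}\,\|w\|,
\end{align*}
with $\rho_\delta:=\sqrt{\tilde\rho}\in[0,1)$. This is the workhorse inequality.

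Second, I would choose $\delta$ so that the sublevel set $\{\sqrt{V_\delta}\le\delta\}$ is positively invariant for every $w\in\mathbb{W}$. Invariance requires $\rho_\delta\delta+\sqrt{c_{4,\delta}}\,\overline w\le\delta$, so I take
\begin{align*}
\delta:=\frac{\sqrt{c_{4,\delta}}\,\overline w}{1-\rho_\delta}>0,
\end{align*}
which gives the useful identity $(1-\rho_\delta)\delta=\sqrt{c_{4,\delta}}\,\overline w$.

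Third, I would verify~\eqref{eq:RPI} by splitting into the cases $\sqrt{V_\delta(x,z)}\le\delta$ vs. $\sqrt{V_\delta(x,z)}>\delta$, and within each case, $\|w\|\le\overline w$ (so $\|w\|_{\mathbb{W}}=0$) vs. $\|w\|>\overline w$ (so $\|w\|_{\mathbb{W}}=\|w\|-\overline w$). In every case the workhorse inequality combined with the identity $(1-\rho_\delta)\delta=\sqrt{c_{4,\delta}}\,\overline w$ collapses onto the claim. For instance, when $\sqrt{V_\delta(x,z)}\le\delta$ and $\|w\|>\overline w$, the workhorse bound gives $\sqrt{V^+}-\delta\le \rho_\delta\delta+\sqrt{c_{4,\delta}}\|w\|-\delta=\sqrt{c_{4,\delta}}(\|w\|-\overline w)=\sqrt{c_{4,\delta}}\|w\|_{\mathbb{W}}$, matching~\eqref{eq:RPI} since the max on the right is zero. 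The remaining three cases reduce similarly by substituting the workhorse bound and using $\sqrt{c_{4,\delta}}\overline w=(1-\rho_\delta)\delta$ to shuffle constant terms.

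No step looks technically hard; the main subtlety is simply seeing the right choice of $\delta$ and recognizing that the $\max\{\cdot,0\}$ structure on the right-hand side of~\eqref{eq:RPI} is precisely what makes the four-case verification close tightly with equality in the worst subcase ($\sqrt{V_\delta(x,z)}>\delta$, $\|w\|>\overline w$), which is why a weaker constant than $\sqrt{c_{4,\delta}}$ on the disturbance term would not suffice.
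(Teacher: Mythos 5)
Your proposal is correct and follows essentially the same route as the paper: the same quadratic contraction $V_\delta^+\le\rho_\delta^2 V_\delta+c_{4,\delta}\|w\|^2$, the same square-root subadditivity step, and the same choice $\delta=\sqrt{c_{4,\delta}}\,\overline w/(1-\rho_\delta)$. The only difference is presentational — the paper closes the argument in one line by bounding each term via $a\le b+\max\{a-b,0\}$ (applied to $\sqrt{V_\delta}$ with $b=\delta$ and to $\|w\|$ with $b=\overline w$), whereas you unfold the same algebra into four cases.
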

\begin{pf} 
Define $\rho_\delta^2=1-\frac{c_{\delta,3}}{c_{\delta,2}}\in[0,1)$, which yields 
\begin{align*}
&\sqrt{V_\delta(f(x,u)+w,f(z,u))}\stackrel{\eqref{eq:increm_Lyap_2}}{\leq} \sqrt{\rho_\delta^2 V_\delta(x,z)+c_{\delta,4}\|w\|^2}\\
\leq &\rho_\delta\sqrt{ V_\delta(x,z)}+\sqrt{c_{\delta,4}}\|w\|\\
\leq & \rho_\delta\bar{\delta}+\rho_\delta \max\{\sqrt{V_\delta(x,z)}-\bar{\delta},0\}\\
&+\sqrt{c_{\delta,4}}\bar{w}+\sqrt{c_{4,\delta}}\max\{\|w\|-\bar{w},0\}\\
= & \bar{\delta}+\rho_\delta \max\{\sqrt{V_\delta(x,z)}-\bar{\delta},0\}+\sqrt{c_{4,\delta}}\|w\|_{\mathbb{W}}, 
\end{align*}
where the last equation uses $\bar{\delta}=\frac{\sqrt{c_{\delta,4}}}{1-\rho_\delta}\bar{w}>0$. $\hfill\square$
\end{pf}
Inequality~\eqref{eq:RPI} ensures that the sublevel set defined by $\bar{\delta}$ is RPI and exponentially contracting, if the disturbances satisfy $w\in\mathbb{W}$. 
%
%Lemma~\ref{lemma:RPI} builds an RPI set using open-loop stability (Asm.~\ref{ass:increm_stab}) and the robust MPC applies the nominal input~\cite{bayer2013discrete}. Appendix~\ref{app:input} considers a stabilizing feedback $\kappa$, in which case $\pi_{\tube}(x)=\kappa(x,\bar{x}^*_{\tube}(x),\mathbf{u}^*_{\tube,0}(x))$~\cite{singh2023robust}. 
The robust MPC is given by
\begin{align}
\label{eq:MPC_tube}
V_{\tube}(x)=&\min_{\mathbf{u}\in\mathbb{U}^N,\bar{x}\in\mathbb{R}^n}\mathcal{J}_N(\bar{x},{\mathbf{u}}),\\
\text{s.t. }&x_{{\mathbf{u}}}(k,\bar{x})\in\bar{\mathbb{X}},~ k\in\mathbb{I}_{[0,N-1]},\nonumber\\
&V_\delta(x,\bar{x})\leq \bar{\delta}^2,~x_{{\mathbf{u}}}(N,\bar{x})\in\bar{\mathbb{X}}_{\mathrm{f}}.\nonumber
\end{align}
Compared to a nominal MPC~\eqref{eq:MPC_nom}, the robust MPC formulation is subject to tightened constraints and optimizes over the initial nominal state $\bar{x}$ within the RPI set (Lemma~\ref{lemma:RPI}). 
We denote a minimizer by $\bar{x}_\tube^*(x)\in\bar{\mathbb{X}}$, $\mathbf{u}^*_{\tube}(x)\in\mathbb{U}^N$. 
The set of feasible nominal states $\bar{x}$ is defined as $\bar{\mathbb{X}}_N\subseteq\bar{\mathbb{X}}$. 
The set of feasible states $x$ is denoted by $\bar{\mathbb{X}}_{N,\bar{\delta}}$, which satisfies $\bar{\mathbb{X}}_N\oplus\underline{\Omega}_{\bar{\delta}}\subseteq\bar{\mathbb{X}}_{N,\bar{\delta}}\subseteq\mathbb{X}$. 
The closed-loop system is given by
\begin{align}
\label{eq:closed_loop_robust}
x(k+1)=&f(x(k),u(k))+w(k),\nonumber\\
u(k)=&\pi_{\tube}(x(k)):=\mathbf{u}^*_{\tube,0}(x).
\end{align}
The following theorem establishes the closed-loop properties of the robust MPC scheme. 
\begin{theorem}
\label{thm:robust_MPC}
Let Assumptions~\ref{ass:standing}, \ref{ass:increm_stab}, \ref{ass:exp_bounds}\ref{ass:exp_bounds_alpha_delta}, and \ref{ass:terminal_robust} hold. 
Suppose further that $w(k)\in\mathbb{W}$, $k\in\mathbb{I}_{\geq 0}$ and that Problem~\eqref{eq:MPC_tube} is feasible with $x=x_0$. 
Then, Problem~\eqref{eq:MPC_tube} is recursive feasible for all $k\in\mathbb{I}_{\geq 0}$ and the state constraints are robustly satisfied, i.e., $x(k)\in\mathbb{X}$, $k\in\mathbb{I}_{\geq 0}$, for the closed-loop system~\eqref{eq:closed_loop_robust}. 
Furthermore, there exist $\bar{\alpha}_1,\bar{\alpha}_2\in\mathcal{K}_\infty$, such that for all $x\in\bar{\mathbb{X}}_{N,\bar{\delta}}$, $w\in\mathbb{W}$: 
\begin{subequations}
\label{eq:robust_MPC_Lyap}
\begin{align}
\label{eq:robust_MPC_Lyap_1}
\bar{\alpha}_1(\|\bar{x}^*_{\tube}(x)\|)\leq& {V}_{\tube}(x)\leq \bar{\alpha}_2(\|\bar{x}^*_{\tube}(x)\|),\\
\label{eq:robust_MPC_Lyap_2}
V_{\tube}(x^+) \leq &V_{\tube}(x)-\ell(\bar{x}^*_{\tube}(x),u),
\end{align}
\end{subequations}
with $x^+=f(x,u)+w$, $u=\pi_{\tube}(x)$.  
If additionally Assumption~\ref{ass:exp_bounds}\ref{ass:exp_bound_MPC} holds, then $\bar{\alpha}_1,\bar{\alpha}_2$ are quadratic.  
\end{theorem}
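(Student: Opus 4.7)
The plan is to follow the standard tube-based MPC analysis (cf.~\cite{mayne2005robust,bayer2013discrete}), adapted so that the RPI/contraction property is supplied by Lemma~\ref{lemma:RPI} rather than by a polytopic set calculation. The four claims to establish in order are: (i) recursive feasibility, (ii) state constraint satisfaction, (iii) the Lyapunov-type bounds~\eqref{eq:robust_MPC_Lyap}, and (iv) the quadratic upgrade of $\overline{\alpha}_1,\overline{\alpha}_2$ under the additional hypotheses. The ISS-like inequality plays no role; (iii)~is established by plugging a shifted candidate into the cost.

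For recursive feasibility, I would let $(\overline{x},\mathbf{u})=(\overline{x}^*_{\mathrm{tube}}(x),\mathbf{u}^*_{\mathrm{tube}}(x))$ denote the current minimizer and take as candidate at the next step $\overline{x}^+=f(\overline{x},\mathbf{u}_0)$ with the shifted input $\tilde{\mathbf{u}}=(\mathbf{u}_1,\ldots,\mathbf{u}_{N-1},k_{\mathrm{f}}(x_{\mathbf{u}}(N,\overline{x})))$. The tightened state constraints $x_{\tilde{\mathbf{u}}}(k,\overline{x}^+)\in\overline{\mathbb{X}}$ for $k\in\mathbb{I}_{[0,N-1]}$ and the terminal constraint $x_{\tilde{\mathbf{u}}}(N,\overline{x}^+)\in\overline{\mathbb{X}}_{\mathrm{f}}$ follow from the shift together with the positive invariance in Assumption~\ref{ass:terminal_robust}. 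The only non-routine item is verifying the tube constraint $V_\delta(x^+,\overline{x}^+)\leq \delta^2$; here I apply Lemma~\ref{lemma:RPI} with $z=\overline{x}$, $u=\mathbf{u}_0$, and $w\in\mathbb{W}$ so that $\|w\|_{\mathbb{W}}=0$, and use $V_\delta(x,\overline{x})\leq\delta^2$ to obtain $\sqrt{V_\delta(x^+,\overline{x}^+)}-\delta\leq \rho_\delta\max\{\sqrt{V_\delta(x,\overline{x})}-\delta,0\}=0$. Constraint satisfaction $x(k)\in\mathbb{X}$ then follows because $V_\delta(x,\overline{x})\leq\delta^2$ combined with the lower bound in~\eqref{eq:increm_Lyap_1} gives $x-\overline{x}\in\overline{\Omega}_\delta$, so $x\in\overline{\mathbb{X}}\oplus\overline{\Omega}_\delta\subseteq\mathbb{X}$.

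For the Lyapunov bounds, the lower inequality in~\eqref{eq:robust_MPC_Lyap_1} is immediate from $V_{\mathrm{tube}}(x)=\mathcal{J}_N(\overline{x},\mathbf{u})\geq \ell(\overline{x},\mathbf{u}_0)\geq \alpha_\ell(\|\overline{x}\|)$, so $\overline{\alpha}_1:=\alpha_\ell$. The decrease~\eqref{eq:robust_MPC_Lyap_2} is obtained by evaluating the cost on the shifted candidate and collapsing the standard telescoping sum, i.e.
\begin{align*}
V_{\mathrm{tube}}(x^+)\leq \mathcal{J}_N(\overline{x}^+,\tilde{\mathbf{u}})= \mathcal{J}_N(\overline{x},\mathbf{u})-\ell(\overline{x},\mathbf{u}_0)+\Delta_{\mathrm{f}},
\end{align*}
where the terminal correction $\Delta_{\mathrm{f}}=V_{\mathrm{f}}(f(x_N,k_{\mathrm{f}}(x_N)))-V_{\mathrm{f}}(x_N)+\ell(x_N,k_{\mathrm{f}}(x_N))\leq 0$ by the local CLF in Assumption~\ref{ass:terminal_robust}, yielding exactly~\eqref{eq:robust_MPC_Lyap_2} with $\pi_{\mathrm{tube}}(x)=\mathbf{u}_0$.

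The part I expect to require the most care is the upper bound $V_{\mathrm{tube}}(x)\leq \overline{\alpha}_2(\|\overline{x}\|)$, since the right-hand side depends on the \emph{nominal} state only. Locally, for $\overline{x}\in\overline{\mathbb{X}}_{\mathrm{f}}$, the candidate $(\overline{x},k_{\mathrm{f}}(\overline{x}),k_{\mathrm{f}}(\cdot),\ldots)$ is feasible by Assumption~\ref{ass:terminal_robust} and yields $V_{\mathrm{tube}}(x)\leq V_{\mathrm{f}}(\overline{x})\leq \alpha_{\mathrm{f}}(\|\overline{x}\|)$. Extension to all $\overline{x}\in\overline{\mathbb{X}}_N$ uses the standard argument of~\cite[Prop.~2.16]{rawlings2017model}: continuity of $\mathcal{J}_N$ and compactness of $\mathbb{U}$ imply $V_{\mathrm{nom}}$ (with tightened constraints) is upper semicontinuous on $\overline{\mathbb{X}}_N$, whence a $\mathcal{K}_\infty$ bound can be constructed by combining the local $\alpha_{\mathrm{f}}$-bound with a uniform bound on any compact annulus. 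Under the additional hypotheses ($\alpha_\ell,\alpha_{\mathrm{f}}$ quadratic and $\overline{\mathbb{X}}_N$ compact), the local bound is quadratic and, since $\|\overline{x}\|$ is bounded away from zero on $\overline{\mathbb{X}}_N\setminus B_r(0)$ and $V_{\mathrm{tube}}$ is uniformly bounded on the compact set $\overline{\mathbb{X}}_N$, the local quadratic bound is upgraded to a global quadratic bound by choosing a sufficiently large coefficient; the same reasoning gives a quadratic $\overline{\alpha}_1$.
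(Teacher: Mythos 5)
Your proposal is correct and follows essentially the same route as the paper: the same shifted candidate $(\overline{x}^+,\tilde{\mathbf{u}})$ with Lemma~\ref{lemma:RPI} (and $\|w\|_{\mathbb{W}}=0$ for $w\in\mathbb{W}$) certifying the tube constraint, the same $\overline{\mathbb{X}}\oplus\overline{\Omega}_\delta\subseteq\mathbb{X}$ argument for constraint satisfaction, and the same standard nominal-MPC arguments ($\overline{\alpha}_1=\alpha_\ell$, local bound $\alpha_{\mathrm{f}}$ on $\overline{\mathbb{X}}_{\mathrm{f}}$ extended via \cite[Prop.~2.16]{rawlings2017model}, compactness for the quadratic upgrade) for the Lyapunov inequalities. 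The paper's proof is just a terser version of what you wrote.
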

\begin{pf}
The proof follows standard robust MPC arguments~\cite{bayer2013discrete,singh2023robust}\ifbool{arxiv}{.\\
\textbf{Part I: }
Abbreviate $\bar{x}=\bar{x}_{\tube}^*(x)$ and $\bar{x}^+=f(\bar{x},u)$. 
As a candidate solution for Problem~\eqref{eq:MPC_tube}, we use the initial state $\bar{x}^+$, shift the previous optimal input sequence $\bar{\mathbf{u}}$, and append the terminal control law $k_{\mathrm{f}}$, as standard in MPC. 
Lemma~\ref{lemma:RPI} in combination with $V_\delta(x,\bar{x})\leq \bar{\delta}^2$ and $w\in\mathbb{W}$ ensures satisfaction of the initial state constraint $V_\delta(x^+,\bar{x}^+)\leq \bar{\delta}^2$. 
The candidate solution also satisfies the state, input, and terminal set constraint with standard arguments from nominal MPC~\cite{rawlings2017model,grune2017nonlinear}, which implies recursive feasibility.\\
\textbf{Part II: }The tightened constraints are constructed such that $\bar{x}\in\bar{\mathbb{X}}$ and $V_\delta(x,\bar{x})\leq \bar{\delta}^2$ imply $x\in\mathbb{X}$. Hence, recursive feasibility implies satisfaction of the state constraints.\\ 
\textbf{Part III: }
Inequalities~\eqref{eq:robust_MPC_Lyap} follow with standard arguments from nominal MPC~\cite[Sec.~2.4.2]{rawlings2017model}, see also the proof of Lemma~\ref{lemma:MPC}.}{, see~\cite{JK_Soft_extended} for details.}$\hfill\square$
\end{pf} 
Inequalities~\eqref{eq:robust_MPC_Lyap} ensure that the nominal state $\bar{x}$ converges to the origin and correspondingly the true state $x$ converges to an RPI set around the origin.

\subsection{Proof of Theorem~\ref{thm:RMPC_relax}}
\label{app:robust_proof}
In the following, we proof Theorem~\ref{thm:RMPC_relax}, by using the established properties of the robust MPC (Thm.~\ref{thm:robust_MPC}). 
Note that compared to the robust MPC in Problem~\eqref{eq:MPC_tube}, the proposed formulation in Problem~\eqref{eq:MPC_tube_relax} simply relaxes the initial state constraint with a slack $s$, which is penalized in the cost. 
\begin{pf}
The proof follows the arguments in Theorems~\ref{thm:global}, \ref{thm:constraints}, and \ref{thm:robust_MPC}.\\
\textbf{Part I: }
Given any $x(k)=x\in\mathbb{R}^n$, $w(k)=w\in\mathbb{R}^n$, we denote $\bar{x}=\bar{x}_{\tubeSlack}^*(x)$, $u=\pi_{\tubeSlack}(x)$, $s=\max\left\{\sqrt{V_\delta(x,\bar{x})}-\bar{\delta},0\right\}$, $\bar{x}^+=f(\bar{x},u)$, and $x^+=f(x,u)+w$.
The slack $s^+$ corresponding to the candidate solution $\bar{x}^+$ satisfies
\begin{align*}
s^+:=&\max\left\{\sqrt{V_\delta(x^+,\bar{x}^+)}-\bar{\delta},0\right\}\\
\stackrel{\eqref{eq:RPI}}{\leq}&\rho_\delta\max\{\sqrt{V_\delta(x,\bar{x})}-\bar{\delta},0\}+\sqrt{c_{4,\delta}}\|w\|_{\mathbb{W}}\\ 
=&\rho_\delta s+\sqrt{c_{4,\delta}}\|w\|_{\mathbb{W}}.\nonumber
\end{align*}
Using Cauchy-Schwarz and Young’s inequality with some $\epsilon>0$ yields
\begin{align}
\label{eq:tube_candidate}
(s^+)^2\leq \underbrace{(1+\epsilon)\rho_\delta^2}_{=:\tilde{\rho}_\delta} s^2+\dfrac{1+\epsilon}{\epsilon}c_{4,\delta}\|w\|_{\mathbb{W}}^2,
\end{align}	
with $\tilde{\rho}_\delta<1$ by choosing $\epsilon>0$ small enough.
Analogous to the nominal cost decrease form Theorem~\ref{thm:robust_MPC}, the same candidate solution yields
\begin{align*}
&V_{\tubeSlack}(x^+)-V_{\tubeSlack}(x)\\
\stackrel{\eqref{eq:robust_MPC_Lyap_2}}{\leq}&
\lambda(s^+)^2-\lambda s^2-\ell(\bar{x},u)\\
\stackrel{\mathrm{Asm.}\ref{ass:standing},\ref{ass:exp_bounds}\ref{ass:exp_bound_MPC},\eqref{eq:tube_candidate}}{\leq} &-(1-\tilde{\rho}_\delta)\lambda s^2-c_\ell\|\bar{x}\|^2+\lambda\dfrac{1+\epsilon}{\epsilon}c_{4,\delta}\|w\|_{\mathbb{W}}^2\\
\stackrel{\eqref{eq:robust_MPC_Lyap_1}}{\leq} &-(1-\tilde{\rho})V_{\tubeSlack}(x)+\lambda c_{\mathrm{w}}\|w\|_{\mathbb{W}}^2,
\end{align*}
with $\tilde{\rho}:=\max\{\tilde{\rho}_\delta,\bar{\rho}_{\nom}\}\in[0,1)$, $\bar{\rho}_{\nom}=1-c_\ell/\bar{c}_2\in[0,1)$, $\bar{\alpha}_2(r)=\bar{c}_2r^2$, and $c_{\mathrm{w}}:=\frac{1+\epsilon}{\epsilon}c_{4,\delta}$, i.e.,
Inequality~\eqref{eq:Lyap_robust} holds.\\ 
\textbf{Part II: }
The distance to the constraint set satisfies
\begin{align}
\label{eq:violation_robust_slack_bound}
&\sqrt{c_{\delta,1}}\|x\|_{\mathbb{X}}\leq\sqrt{c_{\delta,1}}\|x\|_{\bar{\mathbb{X}}\oplus\bar{\Omega}_{\bar{\delta}}}\\
\leq&\sqrt{c_{\delta,1}}\|x-\bar{x}\|_{\bar{\Omega}_{\bar{\delta}}}
=\max\{0,\sqrt{c_{\delta,1}}\|x-\bar{x}\|-\bar{\delta}\}\nonumber\\
\leq &\max\left\{\sqrt{V_{\delta}(x,\bar{x})}-\bar{\delta},0\right\}=s\stackrel{\eqref{eq:MPC_tube}}{\leq}\sqrt{V_{\tubeSlack}(x)/\lambda}.\nonumber
\end{align}
Analogous to Theorem~\ref{thm:constraints}, we use Inequality~\eqref{eq:Lyap_robust} in a telescopic sum to obtain
\begin{align*}
&c_{\delta,1}\sum_{k=0}^{T-1}\|x(k)\|_{\mathbb{X}}^2
\stackrel{\eqref{eq:violation_robust_slack_bound}}{\leq} \sum_{k=0}^{T-1} V_{\tubeSlack}(x(k))/\lambda\\
\stackrel{\eqref{eq:Lyap_robust}}{\leq} &\sum_{k=0}^{T-1} \dfrac{\tilde{\rho}^k}{\lambda}V_{\tubeSlack}(x_0)
+c_{\mathrm{w}}\sum_{k=0}^{T-1}\sum_{j=0}^{k-1}\tilde{\rho}^{k-j-1}\|w(j)\|^2_{\mathbb{W}}\\ 
\leq&\dfrac{1}{\lambda(1-\tilde{\rho})}V_{\tubeSlack}(x_0)+\dfrac{c_{\mathrm{w}}}{1-\tilde{\rho}}\sum_{k=0}^{T-1}\|w(k)\|^2_{\mathbb{W}}.
\end{align*} 
Given $\mathcal{J}_N$ continuous, and $\bar{\mathbb{X}}_N,\mathbb{U}$ compact, there exists a uniform upper bound $\bar{V}>0$, such that $\mathcal{J}_N(\bar{x},\mathbf{u})\leq \bar{V}$ for any $\bar{x}\in\bar{\mathbb{X}}_N$, $\mathbf{u}\in\mathbb{U}^N$. 
Consider a candidate $\bar{x}\in\arg\min_{\bar{x}\in \bar{\mathbb{X}}_N}\|x_0-\bar{x}\|$, which satisfies
$\sqrt{V_{\delta}(x_0,\bar{x})}\stackrel{\eqref{eq:increm_Lyap_1}}{\leq} \sqrt{c_{\delta,2}}\|x_0-\bar{x}\|=\sqrt{c_{\delta,2}}\|x_0\|_{\bar{\mathbb{X}}_N}$.
The corresponding slack variable $s$ satisfies
\begin{align*}
s=&\max\{\sqrt{V_{\delta}(x_0,\bar{x})}-\bar{\delta},0\}\\
\leq& \sqrt{c_{\delta,2}}\{\|x_0\|_{\bar{\mathbb{X}}_N}-\bar{\delta}/{\sqrt{c_{\delta,2}}},0\}\\
=&\sqrt{c_{\delta,2}}\min_{\tilde{x}:\|\tilde{x}\|\leq \bar{\delta}/\sqrt{c_{\delta,2}}}\|x_0-\tilde{x}\|_{\bar{\mathbb{X}}_N}=c_{\delta,2}\|x_0\|_{\bar{\mathbb{X}}_N\oplus\underline{\Omega}_{\bar{\delta}}}.
\end{align*}
This implies 
\begin{align*}
V_{\tubeSlack}(x_0)\leq& \bar{V}+\lambda s^2
\leq \bar{V}+\lambda c_{\delta,2}\|x_0\|_{\bar{\mathbb{X}}_N\oplus\underline{\Omega}_{\bar{\delta}}}^2.
\end{align*}
Inequality~\eqref{eq:constraint_robust} follows by choosing $\bar{c}_{\mathbb{X},1}:=\frac{c_{\delta,2}}{(1-\tilde{\rho})c_{\delta,1}}$, $\bar{c}_{\mathbb{X},2}:=\frac{\bar{V}}{(1-\tilde{\rho})c_{\delta,1}}$, and $c_{\mathbb{X},3}:=\frac{c_{\mathrm{w}}}{(1-\tilde{\rho})c_{\delta,1}}$. $\hfill\square$
\end{pf}

\subsection{Discussion}
\label{sec:robust_discussion}
%, cf.~\cite[Prop. 2]{mayne2005robust}
In the standard robust MPC setting (Thm.~\ref{thm:robust_MPC}), we assume initial feasibility of a robust MPC problem ($x_0\in\bar{\mathbb{X}}_N\oplus\underline{\Omega}_{\bar{\delta}}$) and suitable bounds on the disturbances ($w(k)\in\mathbb{W}$). 
In this case, Inequality~\eqref{eq:constraint_robust} ensures that the constraint violation becomes arbitrarily small as we increase the weight $\lambda$. 
Furthermore, Inequality~\eqref{eq:Lyap_robust} ensures that the value function $V_{\tubeSlack}$ exponentially converges to zero and thus the true state $x(k)$ converges to the RPI set $\{x\in\mathbb{R}^n|~V_\delta(x,0)\leq \bar{\delta}^2\}\subseteq\bar{\Omega}_{\bar{\delta}}\subseteq\mathbb{X}$ around the origin. 
Thus, we recover the closed-loop properties of the robust MPC scheme in case $w(k)\in\mathbb{W}$ (cf. Thm.~\ref{thm:robust_MPC}).
Considering overall implementation, compared to the robust MPC~\eqref{eq:MPC_tube}, only a softening of the initial state constraint with a proper weighting is needed. 
This makes the proposed approach very attractive for practical application.

{\textit{Combining robust MPC and soft constraints:}
We note that combinations of soft constraints and robust MPC formulations are, e.g., also suggested in~\cite[Sec.~V.B]{zeilinger2014soft} for linear systems. 
Similar to the derived result, this formulation also recovers the properties of the robust MPC if $w(k)\in\mathbb{W}$ (cf.~\cite[Rk.~V.3]{zeilinger2014soft}). 
However, as with most soft-constrained MPC formulations (cf. Sec.~\ref{sec:linear}), all closed-loop properties in~\cite{zeilinger2014soft} may be lost if (too) large disturbances $w(k)\notin\mathbb{W}$ occur, compare also the example Section~\ref{sec:num_nonlinear}. 
On the other hand, Theorem~\ref{thm:RMPC_relax} ensures that the impact of arbitrarily large disturbances on stability and constraint violations is suitably bound.}

{\textit{Variations in robust MPC design:} 
We have presented the robust MPC formulation according to the problem setting in Section~\ref{sec:theory}: an incremental Lyapunov function $V_\delta$ certifying (exponential) stability is known. 
%unstable
Following the derivation in Appendix~\ref{app:input}, this can be naturally relaxed to stability under some feedback $\kappa$. In this case, the applied control input $u=\kappa(x,\bar{x},\bar{u})$ steers the state $x$ to the nominal state $\bar{x}$, as standard in robust the MPC~\cite{singh2023robust}.
In case of a large disturbances $w\notin\mathbb{W}$, this combination results in violations of the input constraints. 
%Implicit 
Knowledge of an explicit Lyapunov function $V_\delta$ can also be relaxed with an implicit trajectory-based characterization, see Appendix~\ref{app:implicit}. 
This is also of relevance for the design of nonlinear robust MPC schemes in general.}

%\input{Tube_proof}
%!TEX root = ./Soft.tex
%%%%%%%%%%%%%%%%%%%%%%%%%%%%%%%%%%%%%%%%%%%%%%%%%%%%%%%%%%%%%%%%%%%%%%%%%%%%%%% 
\section{Details regarding the numerical examples}
\label{app:num}
In the following, we provide additional details for the numerical examples in Section~\ref{sec:num}.
All computations were carried out using Matlab, on a Acer Aspire A715-75G laptop with Intel i7-10750H CPU and 32.0 GB RAM running Windows 10. 
The optimization problems are solved using quadprog and IPOPT~\cite{wachter2006implementation} with CasADi~\cite{andersson2019casadi}. 
\subsection{Comparison to linear soft-constrained MPC}
We consider a mass-spring-damper system taken from~\cite{wabersich2021soft} with spring constant $k=1$, mass $m=1$, damping factor $d=0.1$, resulting in the continuous-time dynamic system: 
\begin{align*}
\dot{x}=\begin{pmatrix}
0&1\\-k/m & -d/m
\end{pmatrix}x+\begin{pmatrix}
0\\1
\end{pmatrix}u.
\end{align*}
The sampling time is chosen as $0.05$ and the discrete-time matrics $A,B$ are computed using the MATLAB command \textit{c2d}. 
The quadratic stage cost is given by $Q=\text{diag}(1,0.1)$, $R=0.2$ and the prediction horizon is $N=10$. 
The quadratic penalties for the soft constraints (cf. Sec.~\ref{sec:linear}) are weighted using $\lambda=10^3$, $Q_\xi=10^5 I_p$, resulting in negligible constraint violations on the feasible set $\mathbb{X}_N$.

%design
We compute the terminal cost $V_{\mathrm{f}}$ using the LQR and $\mathbb{X}_{\mathrm{f}}$ as the maximal positive invariant set. 
Given the nominal MPC, the offline design and implementation of the proposed formulation required two additional lines of Matlab code to compute $V_\delta$ using \textit{dLyap} and replacing the initial state constraint by the quadratic penalty.
The soft-\textbf{P} formulation simply relaxes the state constraint with the quadratic penalty $\|\xi\|_{Q_\xi}^2$. 
The soft-\textbf{T} formulation computes a larger terminal set $\mathbb{X}_{\mathrm{f}}$ with soft state constraints using the code from the paper~\cite{wabersich2021soft}. 
The soft-\textbf{G} formulation computes a seperate terminal cost $V_{\mathrm{f}}$ using the \textit{dLyap} command (cf. Sec.~\ref{sec:linear}).  
%assumption
All conditions posed in the main theoretical results (Sec.~\ref{sec:theory}) hold for this linear example (cf. Section~\ref{sec:linear}).  
The average computation times are computed by gridding the state constraint $\mathbb{X}\subseteq\mathbb{R}^2$ with $101^2$ points and excluding any point where the nominal MPC is not feasible. 

\subsection{Nonlinear system subject to disturbances}
The dynamics of the four-tank system~\cite{limon2018nonlinear} can be compactly written as:
\begin{align*}
%reorder: [h_1,h_3; h_2,h_4]
\dot{x}_1=&-c_1\sqrt{x_1}+c_{2}\sqrt{x_2}+c_{1,\mathrm{u}}u_1,~~ 
\dot{x}_2=-c_2\sqrt{x_2}+c_{2,\mathrm{u}}u_2, \\
\dot{x}_3=&-c_3\sqrt{x_3}+c_{4}\sqrt{x_4}+c_{3,\mathrm{u}}u_2,\nonumber~~ 
\dot{x}_4=-c_4\sqrt{x_4}+c_{4,\mathrm{u}}u_1,\nonumber
\end{align*}
with states $x\in\mathbb{R}_{\geq 0}^4$, inputs $u\in\mathbb{R}^2_{\geq 0}$. 
The constants are given by $c_1=0.0096$, $c_2=0.0111$, $c_3=0.0075$, $c_4=0.0069$, $c_{1,\mathrm{u}}=0.0014$, $c_{2,\mathrm{u}}=0.0019$, $c_{3,\mathrm{u}}=0.0028$, $c_{4,\mathrm{u}}=0.0032$.
The constraints are given by $u\in\mathbb{U}=\{u\in\mathbb{R}^2_{\geq 0}|~u_1\leq 3.6,~u_2\leq 4\}$ and $\mathbb{X}=\{x|~x_i\geq 0.2,~i\in\mathbb{I}_{[1,4]},~x_i\leq 1.36, i=1,3~x_i\leq 1.4, i=2,4\}$.

Next, we derive a  quadratic  incremental Lyapunov function $V_\delta$ $V_\delta(x,z)=\|x-z\|_P^2$ certifying open-loop stability (Asm.~\ref{ass:increm_stab}). 
It suffices to ensure $A^\top(x) P +PA(x)\prec 0$ (cf., e.g.,~\cite{tran2019convergence})
with the (continuous-time) Jacobian/linearization
\begin{align*}
A(x)=-0.5
\begin{pmatrix}
\tilde{c}_1&-\tilde{c}_2&0&0\\
0&\tilde{c}_2&0&0\\
0&0&\tilde{c}_3&-\tilde{c}_4\\
0&0&0&\tilde{c}_4
\end{pmatrix},\quad \tilde{c}_i=c_ix_i^{-0.5}. 
\end{align*} 
Given the structure of the open-loop system, we consider the diagonal matrix $P=\text{diag}(1,p_2,1,p_4)$ and the stability condition reduces to
\begin{align*} 
\begin{pmatrix}
-2\tilde{c}_1&\tilde{c}_2\\
\tilde{c}_2&-2\tilde{c}_2p_2\\
\end{pmatrix}\prec 0,\quad 
\begin{pmatrix}
-2\tilde{c}_3&\tilde{c}_4\\
\tilde{c}_4&-2\tilde{c}_4p_4
\end{pmatrix}\prec 0. 
\end{align*}
The physical model is only well-defined for $x_i\in(0,\infty)$ and hence open-loop stability (Asm.~\ref{ass:increm_stab}) and closed-loop properties cannot be shown globally for all $x\in\mathbb{R}^n$.
Instead, we provide semi-global results in the sense that for each compact interval $x_i\in[\underline{x},\overline{x}]$, $i\in\mathbb{I}_{[1,4]}$, $0<\underline{x}\leq\overline{x}<\infty$, we can compute a suitable matrix $P$.
We use $\overline{x}=2$, $\underline{x}=0.02$, resulting in $P=\text{diag}(1,2,1,2)$. 
All simulations are contained in this interval and Assumption~\ref{ass:increm_stab} holds with $V_\delta(x,z)=\|x-z\|_P^2$ on the considered interval (neglecting discretization error). 

%Nominal MPC
The nominal MPC scheme is implemented using an additional artificial setpoint $y_{\mathrm{s}}$ for the terminal constraints to track the desired output $y_{\mathrm{d}}$~\cite{limon2018nonlinear}. 
This enlarges the feasible set $\mathbb{X}_N$, which is also exploited in the soft-constrained MPC in~\cite{zeilinger2014soft}.
The terminal set $\mathbb{X}_{\mathrm{f}}$ is a simple terminal equality constraint w.r.t. the artificial setpoint.
This also ensures recursive feasibiltiy of the nominal MPC in the absence of disturbances (cf.~\cite{limon2018nonlinear} for details). 
We use an explicit fourth order Runge Kutta discretization with sampling time of $10$s. 
The prediction horizon is $N=10$.
We consider quadratic penalties with $V_{\mathrm{o}}=10^4\|y_{\mathrm{s}}-y_{\mathrm{t}}\|^2$ (cf.~\cite{limon2018nonlinear}), $Q_\xi=10^4\cdot I_p$, $\lambda=10^5$, $Q=I_n$, $R=10^{-2}I_m$. 
The level set of the RPI set for the robust approach is chosen as $\bar{\delta}=0.05$.

}{}
\end{document}